\newcommand{\arxiv}[1]{arXiv:#1}
\newcommand{\new}[1]{{#1}}
\newcommand{\old}[1]{{#1}}
\newtheorem{theorem}{Theorem}
\newtheorem{lemma}[theorem]{Lemma}
\newtheorem{proposition}[theorem]{Proposition}
\theoremstyle{definition}
\newtheorem{remark}[theorem]{Remark}
\newtheorem{definition}[theorem]{Definition}
\newtheorem{problem}[theorem]{\new{Open} Problem}
\newcommand{\eref}[1]{(\ref{e.#1})}
\newcommand{\fref}[1]{Figure \ref{f.#1}}
\newcommand{\pref}[1]{Proposition \ref{p.#1}}
\newcommand{\tref}[1]{Theorem \ref{t.#1}}
\newcommand{\lref}[1]{Lemma \ref{l.#1}}
\newcommand{\qref}[1]{Problem \ref{q.#1}}
\newcommand{\R}{\mathbb{R}}
\newcommand{\Z}{\mathbb{Z}}
\newcommand{\C}{\mathbb{C}}
\newcommand{\N}{\mathbb{N}}
\newcommand{\trace}{\operatorname{tr}}
\newcommand{\ep}{\varepsilon}
\newcommand{\mat}[1]{{\left[ \begin{matrix} #1 \end{matrix} \right]}}
\newcommand{\smat}[1]{{\left[ \begin{smallmatrix} #1 \end{smallmatrix} \right]}}
\begin{document}

\title{Stability of patterns in the Abelian sandpile}

\author{Wesley Pegden}
\address{Department of Mathematics, Carnegie Mellon University, Pittsburgh, PA}
\email{wes@math.cmu.edu}

\author{Charles K Smart}
\address{Department of Mathematics, The University of Chicago, Chicago, IL}
\email{smart@math.uchicago.edu}

\begin{abstract}
  We show that the patterns in the Abelian sandpile are stable.  The proof combines the structure theory for the patterns with the regularity machinery for non-divergence form elliptic equations.  The stability results allows one to improve weak-$*$ convergence of the Abelian sandpile to pattern convergence for certain classes of solutions.
\end{abstract}

\maketitle

\section{Introduction}

Consider the following discrete boundary value problem for a bounded open set $\Omega \subseteq \R^2$ with the exterior ball condition.  For each integer $n > 0$, let $u_n : \Z^2 \to \Z$ be the point-wise least function that satisfies
\begin{equation}
  \label{e.fde}
  \begin{cases}
    \Delta u_n \leq 2 & \mbox{in } \Z^2 \cap n \Omega \\
    u_n \geq 0 & \mbox{in } \Z^2 \setminus n \Omega,
  \end{cases}
\end{equation}
where $\Delta u(x) = \sum_{y \sim x} (u(y) - u(x))$ is the Laplacian on $\Z^2$.  Were it not for the integer constraint on the range of $u_n$, this would be the standard finite difference approximation of the Poisson problem on $\Omega$.  The integer constraint imposes a non-linear structure that drastically changes the scaling limit.  In particular, the Laplacian $\Delta u_n$ is not constant in general.  For example, in the case where $\Omega$ is a unit square, depicted in \fref{identity}, $\Delta u_n$ a fractal structure reminiscent of a Sierpinski gasket.  Upon close inspection one finds that the triangular regions of this image, displayed in more detail in \fref{zoom}, are filled by periodic patterns.

\begin{figure}[h]
  \includegraphics[width=.23\textwidth]{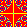}
  \hfill
  \includegraphics[width=.23\textwidth]{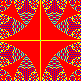}
  \hfill
  \includegraphics[width=.23\textwidth]{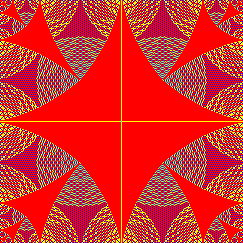}
  \hfill
  \includegraphics[width=.23\textwidth]{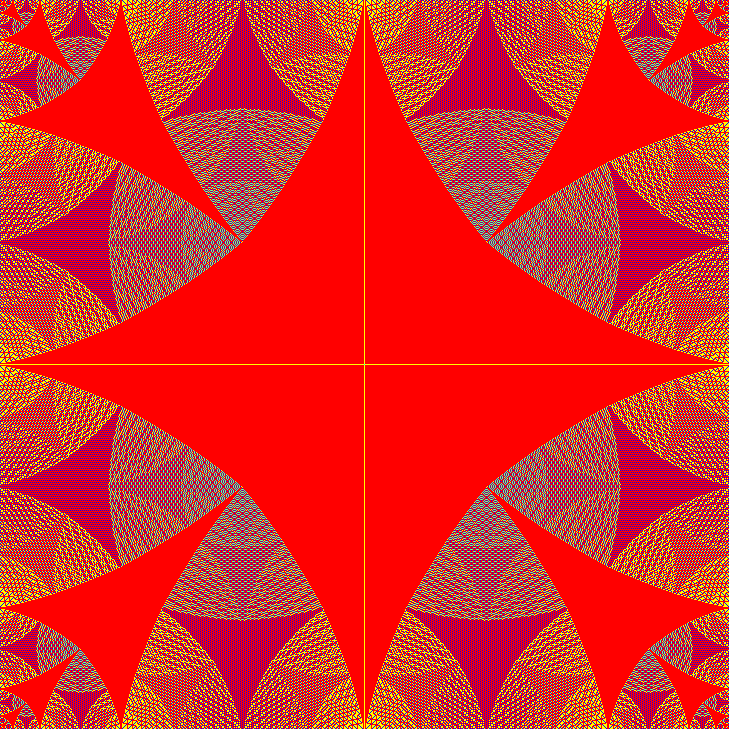}
  \caption{$\Delta u_n$ for $\Omega = (0,1)^2$ and $n = 3^3, 3^4, 3^5, 3^6$.  The colors blue, cyan, yellow, red correspond to values -1,0,1, 2.}
  \label{f.identity}
\end{figure}

The functions $u_n$ arise naturally as toppling functions in the Abelian sandpile model.  Recall that given a configuration of chips on $\Z^2$, \emph{toppling} a vertex distributes one chip from a vertex to each of its neighbors, while untoppling is the reverse operation.  The solution in \eqref{e.fde} is thus the minimum (i.e., maximally negative) sequence of topplings inside $\Z^2\cap n\Omega$ which does not result in greater than 2 chips at any site.  In particular $\Delta u_n+1$ is the unique \emph{recurrent} configuration on $\Z^2\cap n\Omega$ equivalent by topplings to the all-1's configuration in the square-lattice sandpile dynamics with toppling cutoff 3 (see Section \ref{s.prelim}).

We know from \cite{Pegden-Smart} the quadratic rescalings
\begin{equation*}
  \bar u_n(x) = n^{-2} u_n([ n x])
\end{equation*}
converge uniformly as $n \to \infty$ to the solution of a certain partial differential equation.  As a corollary, the rescaled Laplacians $\bar s_n(x) = \Delta u_n([n x])$ converge weakly-$*$ in $L^\infty(\Omega)$ as $n \to \infty$.  That is, the average of $\bar s_n$ over any fixed ball converges as $n \to \infty$.  The arguments establishing this are relatively soft and apply in great generality.  In this article we describe how, when $\bar u$ is sufficiently regular, the convergence of the $\bar s_n$ can be improved.

To get an idea of what we aim to prove, consider \fref{zoom}, which displays the triangular patches of \fref{identity} in greater detail.  It appears that, once a patch is formed, it is filled by a double periodic pattern, possibly with low dimensional defects.  This phenomenon has been known experimentally since at least the works of Ostojic \cite{Ostojic} and Dhar-Sadhu-Chandra \cite{Dhar-Sadhu-Chandra}.  The recent work of Kalinin-Shkolnikov \cite{Kalinin-Shkolnikov} identifies the defects, in a more restricted context, as tropical curves.

The shapes of the limiting patches are known in many cases.  Exact solutions for some other choices of domain are constructed by Levine and the authors \cite{Levine-Pegden-Smart-1}; the key point is that the notion of convergence used in this previous work ignores small-scale structure, and thus does not address the appearance of patterns.  The ansatz of Sportiello \cite{Sportiello} can be used to adapt these methods to the square with cutoff 3, which yields the continuum limit of the sandpile identity on the square.  Meanwhile, work of Levine and the authors \cite{Levine-Pegden-Smart-1} did classify the patterns which should appear in the sandpile, in the course of characterizing the structure of the continuum limit of the sandpile.  To establish that the patterns themselves appear in the sandpile process, it remains to show that this pattern classification is exhaustive, and that the patterns actually appear where they are supposed to.  In this manuscript  we complete this framework, and our results allow one to prove that the triangular patches are indeed composed of periodic patterns, up to defects whose size we can control.

\begin{figure}[h]
  \includegraphics[width=\textwidth]{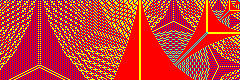}
  \caption{The patterns in the upper right corner of $\Delta u_n$ for $\Omega = (0,1)^2$ and $n = 600$.  The colors blue, cyan, yellow, red correspond to value $-1$, $0$, $1$, $2$.}
  \label{f.zoom}
\end{figure}

We describe our result for \eref{fde} with $\Omega = (0,1)^2$, leaving the more general results for later.  A doubly periodic {\em pattern} $p : \Z^2 \to \Z$ is said to $R$-match an {\em image} $s : \Z^2 \to \Z$ at $x \in \Z^2$ if, for some $y \in \Z^2$,
\begin{equation*}
  s(x+z) = p(y+z) \quad \mbox{for } z \in \Z^2 \cap B_R,
\end{equation*}
where $B_R$ is the Euclidean ball of radius $R$ and center $0$.

\begin{theorem}
  \label{t.main}
  Suppose $\Omega = (0,1)^2$.  There are disjoint open sets $\Omega_k \subseteq \Omega$ and doubly periodic patterns $p_k : \Z^2 \to \Z$ for each $k\geq 1$, and constants $L > 1$ and \new{ $\alpha,\delta \geq 0$} such that the following hold for all $n\geq 1$:
  \begin{enumerate}
  \item $|\Omega \setminus \cup_{1 \leq k \leq n} \Omega_k| \leq \new{ n^{-\delta}}$.
  \item For all $1 < r < n$, the pattern $p_k$ $r$-matches the image $\Delta u_n$ at a \new{$1 - L^k n^{-\alpha/4} r^{1/2}$} fraction of points in $n \Omega_k$.
  \end{enumerate}
\end{theorem}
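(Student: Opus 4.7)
The plan is to combine three ingredients: a structural decomposition of the continuum limit $\bar u$, a polynomial convergence rate for $\bar u_n \to \bar u$ obtained from non-divergence form elliptic regularity, and a local stability step that converts this rate into density bounds on pattern defects.

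For the first two ingredients, the structure theory of \cite{Levine-Pegden-Smart-1} decomposes $\bar u$ for the unit square into countably many open triangular patches, on each of which $\bar u$ coincides with a quadratic $q_k$ and carries a canonically associated doubly-periodic pattern $p_k : \Z^2 \to \Z$ whose mean value equals $\Delta q_k$. Ordering these patches by decreasing area and exploiting the self-similar, Sierpinski-gasket-like structure of the decomposition yields the geometric area bound in~(1). In parallel, I would upgrade the qualitative uniform convergence $\bar u_n \to \bar u$ from \cite{Pegden-Smart} to a polynomial rate $\|\bar u_n - \bar u\|_{L^\infty} \leq C n^{-\alpha}$ on compact subsets of each patch: the limit equation is fully nonlinear of non-divergence form, so Krylov--Safonov and Caffarelli-type interior estimates apply to $\bar u$, and a discrete-to-continuous comparison via inf/sup convolution smoothings of $\bar u_n$ supplies the rate. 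The degradation of regularity constants near the singular set of the decomposition is responsible for the geometric $L^k$ factor associated to the $k$-th patch.

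The principal obstacle is the third step. Within $n\Omega_k$, I would compare $u_n$ to a candidate pattern odometer $v_n(x) = n^2 q_k(x/n) + \phi_k(x)$, where $\phi_k$ is a bounded $\Z^2$-periodic corrector satisfying $\Delta \phi_k = p_k - \Delta q_k$. The least-action characterization of $u_n$ as the pointwise minimum of admissible functions in \eref{fde} propagates the boundary error from $\partial(n\Omega_k)$ inward, yielding $\|u_n - v_n\|_{\infty} \leq C n^{2-\alpha}$ on an appropriately shrunken patch. To deduce~(2), note that a single lattice defect point spoils the $r$-matching at every ball-center within distance $r$ of it, so the $r$-matching failure density is bounded by $r^2$ times the defect point density; balancing this against the uniform odometer deviation through a Calder\'on--Zygmund-type decomposition at three scales (the individual defect, the matching radius $r$, and the patch scale $n$) produces the $(n^{-\alpha} r)^{1/3}$ exponent. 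The delicate point is this final conversion: ruling out the possibility that the discrepancies between $\Delta u_n$ and $p_k$ form a dense background, rather than sparse defects, requires the structure-theoretic classification of \cite{Levine-Pegden-Smart-1} to exclude ``alternative'' stable sandpile configurations in the neighborhood of $p_k$, so that the only nearby admissible local patterns are $p_k$ itself plus low-dimensional tropical-curve-type defects.
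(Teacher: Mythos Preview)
Your three-ingredient architecture matches the paper's, and your treatment of the first two ingredients (piecewise-quadratic structure of $\bar u$, polynomial convergence rate via non-divergence elliptic machinery) is close enough to what is actually done. The gap is entirely in your third step, which is the heart of the paper.

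You propose to convert $L^\infty$ closeness $\|u_n - v_n\|_\infty \leq Cn^{2-\alpha}$ into a defect-density bound via a ``Calder\'on--Zygmund-type decomposition at three scales,'' and you acknowledge the delicate point is ruling out a dense background of discrepancies, which you suggest handling by invoking a classification that the only nearby admissible configurations are $p_k$ plus tropical-curve defects. This is not a mechanism; it is a restatement of the desired conclusion. No such exhaustive local classification is available, and the tropical-curve picture (Kalinin--Shkolnikov) is cited in the paper only as motivation, not as an input. The paper's actual stability argument is a sliding-paraboloid / touching-map technique adapted from the growth lemma for non-divergence elliptic equations: one slides test functions $o(x) - \tfrac12|V|^2r^{-2}|x-y|^2$ under $u_n$, and at each touching point $x_y$ one proves $u_n$ equals a \emph{translation} of $o$ on $B_{2r}(x_y)$. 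The key lemma making this work is a quantitative maximum principle driven by the ``web of twos'' in the odometer tiling: if a recurrent $v$ agrees with $o$ at one vertex but not at a neighbor, then $o-v$ must grow at least linearly in the tile metric (because $\Delta o = 2$ on tile boundaries forces the maximum to escape each tile). This linear growth, combined with the $L^\infty$ bound, forces exact agreement at touching points. Density then comes from near-injectivity of $y \mapsto x_y$, not from any CZ decomposition.

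Finally, your explanation of the $1/3$ exponent is off. It does not arise from three scales in a CZ stopping-time sense; it comes from a single optimization over an intermediate radius $R$ between $r$ and $n$: the boundary-loss fraction in $n\Omega_k$ scales like $R/n$, while the stability theorem gives a defect fraction $\sim R^{-2}rh$ with $h^2 \sim n^{2-\alpha}$, and balancing these two yields $R \sim n^{1-\alpha/3}r^{1/3}$ and the stated bound.
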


We expect that the exponents in this theorem, while effective, are suboptimal.  In simulations, the pattern defects appear to be one dimensional.  This leads to the following problem.

\begin{problem}
  Improve the above estimate to a $1 - L^k n^{-1} r$ fraction of points.
\end{problem}

In fact, we expect that pattern convergence can be further improved in certain settings.  We see below that the points of the triangular patches in the continuum limit of \eref{fde} are all triadic rationals.  Moreover, when we select $n = 3^m$, then the patterns appear without any defects, as in \fref{identity}.  We expect this is not a coincidence.  These so-called ``perfect Sierpinski'' sandpiles have been investigated by Sportiello \cite{Sportiello} and appear in many experiments \new{ \cite{sadhu2010pattern,dhar2009pattern,Caracciolo-Paoletti-Sportiello,Dhar-Sadhu-Chandra}}.

\begin{problem}
  \label{q.perfect}
  Show that, when $n$ is a power of three, the patterns in patches larger than a constant size have no defects.  \new{ Let's discuss this wording.}
\end{problem}

Our proof has three main ingredients.  First, we prove that the patterns in the Abelian sandpile are in some sense stable.  This is a consequence of the classification theorem for the patterns and the growth lemma for elliptic equations in non-divergence form.  Second, we obtain a rate of convergence to the scaling limit of the Abelian sandpile when the limit enjoys some additional regularity.  This is essentially a consequence of the Alexandroff-Bakelman-Pucci estimate for uniformly elliptic equations.  Third, the limit of \eref{fde} when $\Omega = (0,1)^2$ has a piece-wise quadratic solution that can be explicitly computed by our earlier work.  The combination of these three ingredients implies that the patterns appear as \fref{identity} suggests.

The Matlab/Octave code used to compute the figures for this article is included in the arXiv upload and may be freely used and modified.

\subsection*{Acknowledgments}  Both authors are partial supported by the National Science Foundation and the Sloan Foundation.  The second author wishes to thank Alden Research Laboratory in Holden, Massachusetts, for their hospitality while some of this work was completed.

\section{Preliminaries}
\label{s.prelim}

\subsection{Recurrent functions} We recall the notion of being a locally least solution of the inequality in \eref{fde}.

\begin{definition}
  A function $v : \Z^2 \to \Z$ is recurrent in $X \subseteq \Z^2$ if $\Delta v \leq 2$ in $X$ and
  \begin{equation*}
    \sup_Y (v - w) \leq \sup_{X \setminus Y} (v - w)
  \end{equation*}
  holds whenever $w : \Z^2 \to \Z$ satisfies $\Delta w \leq 2$ in a finite $Y \subseteq X$.
\end{definition}

With this terminology, $u_n$ is characterized by being recurrent in $\Z^2 \cap n \Omega$ and zero outside.  The word recurrent usually refers to a condition on configurations $s : X \to \N$ in the sandpile literature \cite{Levine-Propp}.  These notions are equivalent for configurations of the form $s = \Delta v$.  That is, $v$ is a recurrent function if and only if $\Delta v$ is a recurrent configuration.

\subsection{Scaling limit}

We recall that the scaling limit of the Abelian sandpile.

\begin{proposition}[\cite{Pegden-Smart}]
  The rescaled solutions $\bar u_n$ of \eref{fde} converge uniformly to the unique solution $\bar u \in C(\R^2)$ of
  \begin{equation}
    \label{e.pde}
    \begin{cases}
      D^2 \bar u \in \partial \Gamma & \mbox{in } \Omega \\
      \bar u = 0 & \mbox{on } \R^2 \setminus \Omega,
    \end{cases}
  \end{equation}
  where $\Gamma \subseteq \R^{2 \times 2}_{sym}$ is the set of $2 \times 2$ real symmetric matrix $A$ for which there is a function $v : \Z^2 \to \Z$ satisfying
  \begin{equation}
    \label{e.odometer}
    v(x) \geq \tfrac12 x \cdot A x + o(|x|^2) \quad \mbox{and} \quad \Delta v(x) \leq 2 \quad \mbox{for all } x \in \Z^2,
  \end{equation}
  \new{ and $\partial \Gamma$ denotes the (topological) boundary of the set $\Gamma\subseteq \R^{2 \times 2}_{sym}$.}
\end{proposition}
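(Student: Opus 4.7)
The plan is to establish uniform bounds on $\bar u_n$, extract a uniformly convergent subsequence via Arzel\`a--Ascoli, identify the limit as a viscosity solution of the PDE, and finally appeal to a comparison principle for uniqueness.

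As a preliminary I would verify that $\Gamma$ is closed and convex in $\R^{2 \times 2}_{sym}$: closedness is direct from the definition, while convexity follows by averaging the odometers $v$ witnessing two admissible matrices (with a small discretization adjustment to preserve the integer range). In particular $I \in \Gamma$ via $v(x) = \tfrac12 |x|^2$, which yields a quadratic upper barrier $u_n(x) \leq \tfrac12 \dist(x, \Z^2 \setminus n\Omega)^2 + o(n^2)$. Combined with the exterior ball condition on $\Omega$ and the lower bound $u_n \geq 0$, this forces any subsequential limit $\bar u$ to vanish on $\partial \Omega$ and provides the equi-Lipschitz estimates needed to apply Arzel\`a--Ascoli.

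The heart of the argument is the viscosity interpretation of the limit. For the supersolution side, suppose $\bar u - \phi$ has a strict local minimum at $x_0 \in \Omega$ with $A = D^2 \phi(x_0) \in \operatorname{int} \Gamma$; choose an odometer $v$ associated to $A$ and form $w_n(x) = n^2 \phi(x/n) + \ep v(x - [n x_0]) - C$. For suitable $\ep, C$ this $w_n$ satisfies $\Delta w_n \leq 2$ on a discrete ball around $[n x_0]$ and is strictly smaller than $u_n$ there, contradicting the least-function characterization of $u_n$. For the subsolution side, if $A = D^2 \phi(x_0) \notin \Gamma$ at a local maximum of $\bar u - \phi$, then by Hahn--Banach one can separate $A$ from $\Gamma$ by a linear functional on symmetric matrices; the separation produces a discrete obstruction to the condition $\Delta u_n \leq 2$ holding in $n$-scale neighborhoods of $x_0$, contradicting admissibility of $u_n$. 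Together these give $D^2 \bar u \in \partial \Gamma$ in the viscosity sense.

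The main obstacle is the uniqueness step. One reformulates the constraint $D^2 u \in \partial \Gamma$ as $F(D^2 u) = 0$ where, for instance, $F(M)$ is the signed distance from $M$ to $\Gamma$; convexity of $\Gamma$ makes $F$ degenerate elliptic, and in principle the standard Crandall--Ishii--Lions machinery applies. However, the proof really requires verifying that $\partial \Gamma$ has enough strict convexity or nondegeneracy in the relevant directions to drive comparison, since $\Gamma$ is defined combinatorially through odometers on $\Z^2$ and could a priori have flat pieces or corners matching the fractal behavior visible in \fref{identity}. Getting the geometric structure of $\Gamma$ under sufficient control --- in particular showing that it supports a uniqueness regime while accommodating the observed fractal solutions --- is the genuinely hard input, and is where the detailed structure theory of admissible odometers from \cite{Pegden-Smart} enters the argument.
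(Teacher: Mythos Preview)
This proposition is quoted from \cite{Pegden-Smart} and not proved in the present paper, so the comparison is against that reference. Your three-step outline (equicontinuity and Arzel\`a--Ascoli, viscosity identification of subsequential limits, comparison for uniqueness) is indeed the skeleton of the argument there, but two substantive points are off.

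First, you have the two halves of the viscosity identification reversed. When $\phi$ touches $\bar u$ from \emph{below} (a local minimum of $\bar u - \phi$), the conclusion to be drawn is $D^2\phi(x_0)\in\Gamma$, and the proof uses $u_n$ itself --- recentered near $nx_0$ and extracted along a subsequence --- as the integer function witnessing \eqref{e.odometer}; no competitor is built and the least-function property is not invoked. When $\phi$ touches from \emph{above}, one assumes $D^2\phi(x_0)\in\operatorname{int}\Gamma$ for contradiction, chooses a witnessing odometer $v$ with room to spare, and uses it to build an integer competitor violating recurrence of $u_n$. Your $w_n$ construction is essentially this, but you have attached it to the wrong case. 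The Hahn--Banach separation you propose for the other direction does not produce any usable discrete obstruction; the argument goes straight through the definition of $\Gamma$.

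Second, you have located the difficulty in the wrong place. Comparison requires no fine geometry of $\partial\Gamma$: the three soft properties recorded in the very next proposition --- downward closure of $\Gamma$ and the sandwich $\{\trace\leq 1\}\subseteq\Gamma\subseteq\{\trace\leq 2\}$ --- already make the inclusion degenerate elliptic with uniformly elliptic upper and lower envelopes, and standard viscosity comparison then applies directly. The delicate step in \cite{Pegden-Smart} is rather the from-below identification above, where one must promote the locally-defined $u_n$ into a global odometer on $\Z^2$ satisfying \eqref{e.odometer}. (Minor points: $\tfrac12|x|^2$ is not integer-valued on $\Z^2$, so it does not witness $I\in\Gamma$; and convexity of $\Gamma$ is neither needed nor obtained by naively averaging integer-valued odometers --- the monotonicity in item (3) of the next proposition is what is actually used.)
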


The partial differential equation \eref{pde} is interpreted in the sense of viscosity.  This means that if a smooth test function $\varphi \in C^\infty(\Omega)$ touches $\bar u$ from below or above at $x \in \Omega$, then the Hessian $D^2 \varphi(x)$ lies in $\Gamma$ or the closure of its complement, respectively.   That this makes sense follows from standard viscosity solution theory, see for example \cite{Crandall}, and the following basic properties of the set $\Gamma$.

\begin{proposition}[\cite{Pegden-Smart}]
  The following holds for all $A, B \in \R^{2 \times 2}_{sym}$.
  \begin{enumerate}
  \item $A \in \Gamma$ implies $\trace A \leq 2$.
  \item $\trace A \leq 1$ implies $A \in \Gamma$.
  \item $A \in \Gamma$ and $B \leq A$ implies $B \in \Gamma$.
  \end{enumerate}
\end{proposition}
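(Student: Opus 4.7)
I plan to handle the three properties separately. Property (3) is immediate: if $v$ witnesses $A \in \Gamma$ and $B \leq A$ in the PSD order, then $\tfrac{1}{2} x \cdot Bx \leq \tfrac{1}{2} x \cdot Ax$, so the growth lower bound $v(x) \geq \tfrac{1}{2} x \cdot Ax + o(|x|^2)$ implies $v(x) \geq \tfrac{1}{2} x \cdot Bx + o(|x|^2)$, while the pointwise bound $\Delta v \leq 2$ is unchanged; the same $v$ witnesses $B \in \Gamma$.

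For (1), let $v$ witness $A \in \Gamma$, set $q(x) := \tfrac{1}{2} x \cdot Ax$, and note that the discrete Laplacian of a quadratic on $\Z^2$ equals its trace identically, so $\Delta q \equiv \trace A$. The function $w := v - q$ thus satisfies $\Delta w \leq 2 - \trace A$ pointwise and $w(x) \geq o(|x|^2)$. Suppose for contradiction that $\trace A > 2$, and set $c := \trace A - 2 > 0$. Pick $\delta \in (0, c/4)$ and $R$ with $w(x) \geq -\delta |x|^2$ for $|x| \geq R$ (possible by the $o$ condition), and choose $a \in (\delta, c/4)$. Then $h(x) := w(x) + a|x|^2$ satisfies $\Delta h = \Delta w + 4a \leq -c + 4a < 0$ pointwise, while $h(x) \geq (a - \delta)|x|^2 \to +\infty$. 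Hence $h$ attains its global minimum at some finite $x_0 \in \Z^2$, where the discrete mean-value inequality forces $\Delta h(x_0) \geq 0$, a contradiction.

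For (2), I plan to construct an integer witness $v$ when $\trace A \leq 1$ by rounding $q(x) := \tfrac{1}{2} x \cdot Ax$ to integer values. Writing $v = q + m$ with $m$ bounded, the growth condition holds automatically, and the identity $\Delta v = \trace A + \Delta m$ reduces the problem to finding an integer-valued perturbation $m$ satisfying $\Delta m(x) \leq 2 - \trace A$ at every $x$. The natural candidates $\lfloor q \rfloor$ and $\lceil q \rceil$ do not both work for every $A$, so I would let the rounding direction depend on $A$ and, if necessary, augment by an affine shift $q \mapsto q + \ell$ or a doubly periodic integer correction. For rational $A$, the fractional structure of $q$ is periodic and a finite combinatorial check establishes the bound; irrational $A$ can be handled by rational approximation $A' \leq A$ together with (3). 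The main obstacle lies here: ensuring uniformly across admissible $A$ that some rounding prescription yields $\Delta v \leq 2$ requires a careful case analysis, and reflects why the sufficient condition $\trace A \leq 1$ is strictly tighter than the necessary condition $\trace A \leq 2$ coming from (1).
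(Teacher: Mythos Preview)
The paper does not prove this proposition; it is quoted from \cite{Pegden-Smart} and used as a black box. So there is no in-paper argument to compare against, and I assess your proposal on its own merits.

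Your arguments for (1) and (3) are correct. For (3) the monotonicity of the quadratic lower bound is exactly the point. For (1), the computation $\Delta q \equiv \trace A$ and $\Delta(|x|^2) = 4$ on $\Z^2$ are right, and the strict subharmonicity of $h$ together with $h \to +\infty$ forces a finite global minimum where $\Delta h \geq 0$, giving the contradiction.

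Part (2) has a genuine gap. First, your reduction to irrational $A$ via rational approximation is oriented the wrong way: property (3) says $\Gamma$ is downward closed, so to conclude $A \in \Gamma$ you need a rational $A' \geq A$ with $A' \in \Gamma$, not $A' \leq A$; and if $\trace A = 1$ you cannot find rational $A' \geq A$ with $\trace A' \leq 1$, so a closure argument for $\Gamma$ would also be needed. Second, and more seriously, the core claim---that for rational $A$ with $\trace A \leq 1$ some rounding of $q$ (possibly shifted or periodically corrected) satisfies $\Delta v \leq 2$---is asserted as a ``finite combinatorial check'' but never carried out, and naive roundings only give $\Delta v \leq 4$. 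You correctly identify this as the main obstacle, but as written the proposal does not overcome it; the gap between the necessary bound $\trace A \leq 2$ and the sufficient bound $\trace A \leq 1$ is precisely the room consumed by rounding error, and closing it requires an actual construction rather than a plan for one.
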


These basic properties tell us, among other things, that the differential inclusion \eref{pde} is degenerate elliptic and that any solution $\bar u$ satisfies the bounds
\begin{equation*}
  1 \leq \trace D^2 \bar u \leq 2
\end{equation*}
in the sense of viscosity.  This implies enough a prior regularity that we have a unique solution $\bar u \in C^{1,\alpha}(\Omega)$ for all $\alpha \in (0,1)$.

\subsection{Notation}

Our results make use of several arbitrary constants which we do not bother to determine.  We number these according to the result in which they are defined; e.g., the constant $C_{\ref{p.structure}}$ is defined in Proposition \ref{p.structure}.   In proofs, we allow Hardy notation for constants, which means that the letter $C$ denotes a positive universal constant that may differ in each instance.  We let $D \varphi : \R^d \to \R^d$ and $D^2 \varphi : \R^d \to \R^{d \times d}_{sym}$ denote the gradient and hessian of a function $\varphi \in C^2(\R^d)$.  We let $|x|$ denote the $\ell^2$ norm of a vector $x \in \R^d$ and $|A|$ denote the $\ell^2$ operator norm of a matrix $A \in \R^{d \times e}$.

\subsection{Pattern classification}

The main theorem from \cite{Levine-Pegden-Smart-2} states that $\Gamma$ is the closure of its extremal points and that the set of extremal points has a special structure.  We recall the ingredients that we need.

\begin{proposition}[\cite{Levine-Pegden-Smart-2}]
  If
  \begin{equation*}
    \Gamma^+ = \{ P \in \Gamma : \mbox{there is an $\ep > 0$ such that $P - \ep I \leq B \in \Gamma$ implies $B \leq P$} \},
  \end{equation*}
  then $A \in \Gamma$ if and only if $A = \lim_{n \to \infty} A_n$ for some $A_n \leq P_n \in \Gamma^+$.
\end{proposition}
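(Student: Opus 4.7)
The plan is to split the proposition into its two directions. For the easy direction ($\Leftarrow$), I would first observe that $\Gamma$ is closed (this should follow from the asymptotic nature of its defining inequality in \eref{odometer}, possibly via a diagonal argument on the witness odometers, or be a fact already available from the cited structure theory). Combined with the downward-closure of $\Gamma$ noted in the preceding proposition, any sequence $A_n \leq P_n \in \Gamma^+ \subseteq \Gamma$ lies in $\Gamma$, so its limit $A$ does as well.

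For the main direction ($\Rightarrow$), I would reduce to the following one-shot claim: for each $A \in \Gamma$ and each $\delta > 0$, there exists $P \in \Gamma^+$ with $P \geq A - \delta I$. Given this, setting $A_n := A - \delta_n I$ with $\delta_n \to 0$ produces a sequence in $\Gamma$ (by downward closure) converging to $A$ and dominated by some $P_n \in \Gamma^+$.

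To produce such a $P$, I would work in the set $S_\delta := \{B \in \Gamma : B \geq A - \delta I\}$, which is compact: it is closed by the preceding observation, and bounded because $B - (A - \delta I)$ is positive semidefinite with trace at most $2 - \trace(A - \delta I)$, forcing a bound on its operator norm and hence on $B$. By Zorn's lemma --- or concretely by maximizing a strictly order-preserving linear functional $B \mapsto \trace(MB)$ for some positive definite $M$ --- the set $S_\delta$ contains a matrix-order maximal element $P$.

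The main obstacle is upgrading matrix-order maximality in $S_\delta$ to the stronger property in the definition of $\Gamma^+$. A candidate witness $B \in \Gamma$ with $B \geq P - \ep I$ and $B \not\leq P$ would lie in $S_{\delta + \ep} \supsetneq S_\delta$, so maximality on $S_\delta$ alone does not exclude it; in particular, $B$ might be matrix-order \emph{incomparable} with $P$, which is the hard case. I expect this gap must be closed using the pattern classification from \cite{Levine-Pegden-Smart-2}: the candidate elements of $\Gamma^+$ should correspond to periodic sandpile patterns, and the combinatorial rigidity of such patterns should force a cone-like corner structure on $\partial \Gamma$ near $P$ that rules out the incomparable escapes. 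An alternative implementation would construct $P$ directly as the asymptotic Hessian of a judiciously chosen periodic odometer, so that membership in $\Gamma^+$ is built into the construction rather than extracted from maximality afterwards.
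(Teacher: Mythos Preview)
This proposition is not proved in the present paper; it is imported wholesale from \cite{Levine-Pegden-Smart-2}, so there is no in-paper proof to compare against. That said, a few remarks on your sketch are in order.

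Your $\Leftarrow$ direction is fine: closedness of $\Gamma$ (established in \cite{Pegden-Smart}) together with downward closure handles it.

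For $\Rightarrow$, your reduction to ``for every $A\in\Gamma$ and $\delta>0$ there is $P\in\Gamma^+$ with $P\ge A-\delta I$'' is correct, and the compactness of $S_\delta$ is real. The genuine gap is exactly the one you flag: a matrix-order maximal element $P$ of $S_\delta$, or even of $\Gamma$ itself, need not lie in $\Gamma^+$. Maximality only rules out $B\in\Gamma$ with $B\ge P$ and $B\ne P$; the $\Gamma^+$ condition demands that \emph{every} $B\in\Gamma$ with $B\ge P-\ep I$ satisfy $B\le P$, which excludes incomparable neighbors as well. Nothing in the soft compactness/Zorn argument produces that cone structure, and there is no short patch.

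The proof in \cite{Levine-Pegden-Smart-2} does not proceed by maximizing over $\Gamma$ at all. It is constructive: the elements of $\Gamma^+$ are built explicitly, indexed by the circles of an Apollonian-type packing, and for each one a periodic odometer is exhibited (this is where \pref{structure} comes from). The cone property defining $\Gamma^+$ is then a consequence of the combinatorics of that packing, and the density statement ($\Gamma$ is the downward closure of $\overline{\Gamma^+}$) follows because the associated circles cover the relevant parameter space. Your closing sentence --- build $P$ directly as the Hessian of a periodic odometer so that $\Gamma^+$ membership is baked in --- is precisely the route taken there, and it is the only known one; the abstract maximality argument cannot be completed without it.
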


For each $P \in \Gamma^+$, there is a recurrent $o : \Z^2 \to \Z$ witnessing $P \in \Gamma$.  These functions $o$, henceforth called odometers, enjoy a number of special properties.  The most important for us is that the Laplacians $\Delta o$ are doubly periodic with nice structure.  Some of the patterns are on display in \fref{patterns}.  We exploit the structure of these patterns to prove stability.

\begin{figure}[h]
  \includegraphics[angle=90,width=.23\textwidth]{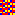}
  \hfill
  \includegraphics[angle=90,width=.23\textwidth]{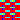}
  \hfill
  \includegraphics[angle=90,width=.23\textwidth]{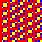}
  \hfill
  \includegraphics[angle=90,width=.23\textwidth]{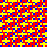}
  \caption{The Laplacian $\Delta o$ for several $P \in \Gamma^+$.  The colors blue, cyan, yellow, and red correspond to values $-1,0,1,2$.}
  \label{f.patterns}
\end{figure}

\begin{proposition}[\cite{Levine-Pegden-Smart-2}]
  \label{p.structure}
  There is a universal constant $C_{\ref{p.structure}} > 0$ such that, for each $P \in \Gamma^+$, there are $A, V \in \Z^{2 \times 3}$, $T \subseteq \Z^2$, and a function $o : \Z^2 \to \Z$, henceforth called an odometer function, such that the following hold.
  \begin{enumerate}
  \item $PV = A$,
  \item $1 \leq |V|^2 \leq C_{\ref{p.structure}} \det(V)$, where $|V|$ is the $\ell^2$ operator norm of $V$,
  \item $A^t Q V + V^t Q A = Q'$, where $Q = \smat{0 & 1 \\ -1 & 0}$ and $Q' = \smat{0 & 1 & -1 \\ -1 & 0 & 1 \\ 1 & -1 & 0}$,
  \item $A \smat{1\\1\\1} = V \smat{1\\1\\1} = \smat{0\\0}$,
  \item $o$ is recurrent and there is a quadratic polynomial $q$ such that $D^2 q = P$, $o - q$ is $V \Z^3$-periodic, and $|o - q| \leq C_{\ref{p.structure}} |V|^2$,
  \item $\Delta o = 2$ on $\partial T = \{ x \in T : y \sim x \mbox{ for some } y \in \Z^2 \setminus T \}$.
  \item If $x \sim y \in \Z^2$, then there is $z \in \Z^3$ such that $x, y \in T + V z$.
  \item If $z, w \in \Z^3$ and $(T + V z) \cap (T + Vw) \neq \emptyset$ if and only if $|z - w|_1 \leq 1$.
  \end{enumerate}
\end{proposition}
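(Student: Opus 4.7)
The plan is to build the odometer $o$ associated to $P \in \Gamma^+$ as (essentially) the pointwise smallest recurrent function majorizing the quadratic $q(x) = \tfrac12 x \cdot P x$, and then to extract the algebraic data $(A, V, T)$ from its structure. The key leverage is the extremality hypothesis on $P$: if $o - q$ failed to be periodic, then by shifting and taking limits of shifted odometers one could produce an $A \in \Gamma$ with $P - \ep I \leq A \not\leq P$, contradicting $P \in \Gamma^+$.

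First I would establish that $o - q$ is bounded. Applying the recurrence machinery to differences $o(\cdot + y) - o(\cdot)$, together with the matching quadratic lower bound coming from $D^2 q = P$, one bootstraps to a bound of the quantitative form $|o - q| \leq C_{\ref{p.structure}} |V|^2$ once $V$ has been identified. Extremality then promotes this boundedness to exact $V\Z^3$-periodicity for some $V \in \Z^{2\times 3}$ whose columns span the period lattice; the apparently redundant third column is imposed by the normalization $V\mathbf 1 = 0$ to make explicit the triangular symmetry visible in \fref{patterns}.

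Next I would read off $A$ directly from the periodicity. Matching the linear-in-$x$ part of $o(x+Vz) - o(x)$ (which must reproduce $\nabla q(x) \cdot Vz$ up to a remainder that depends only on $z$) forces
\begin{equation*}
  o(x+Vz) - o(x) = (PVz) \cdot x + b(z),
\end{equation*}
so that $A := PV$ is automatically integral and satisfies $A\mathbf 1 = 0$. The identity $A^tQV + V^tQA = Q'$ is then a cocycle / single-valuedness condition on $b$: it enforces that the additive constants accumulated around closed period loops in $\Z^2 / V\Z^3$ are consistent with the prescribed skew-symmetric structure. The non-degeneracy $|V|^2 \leq C_{\ref{p.structure}} \det V$ comes from the $\trace P \leq 2$ constraint at $\partial \Gamma$: an excessively thin fundamental domain would force $o$ to grow in one direction faster than $\Delta o \leq 2$ permits. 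Finally the tile $T$ is a canonical fundamental domain for the $V\Z^3$-action, built from the sites where $\Delta o < 2$ together with the boundary layer where $\Delta o = 2$, and the adjacency statements (6)--(8) are combinatorial properties of this canonical choice.

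The main obstacle is upgrading boundedness of $o - q$ to genuine $V\Z^3$-periodicity using extremality; this is precisely where the structural theorem of \cite{Levine-Pegden-Smart-2} does its real work, as the argument requires carefully translating the perturbation condition ``$P - \ep I \leq A \in \Gamma$ implies $A \leq P$'' into a rigidity statement about recurrent functions on $\Z^2$. For the detailed construction I would defer to that paper and merely record here the output in the form needed for the stability arguments below.
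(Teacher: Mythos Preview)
The paper does not prove this proposition at all; it is quoted from \cite{Levine-Pegden-Smart-2} and used as a black box, with only the informal remarks following the statement (about the periodicity of $\Delta o$ and the web of twos) serving as commentary. Your proposal ultimately does the same thing---you defer the actual argument to \cite{Levine-Pegden-Smart-2}---so you are in agreement with the paper's treatment.

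The heuristic sketch you add on top is broadly consistent with how the construction in \cite{Levine-Pegden-Smart-2} actually proceeds (build a least recurrent majorant of the quadratic, use extremality of $P$ to force periodicity, read off $A = PV$ from the linear part of the period shifts, and take $T$ to be a fundamental tile bounded by the $\{\Delta o = 2\}$ web). You also correctly flag the genuine difficulty: the step from ``$o - q$ bounded'' to ``$o - q$ exactly $V\Z^3$-periodic'' is where extremality is really used, and your sketch does not supply that argument. Since the paper does not either, this is not a gap relative to the paper, but you should be aware that your outline is motivation rather than proof.
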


This proposition implies that $\Delta o$ is $V \Z^3$-periodic and that the set $\{ \Delta o = 2 \}$ has a unique infinite connected component.  Moreover, there is a fundamental tile $T \subseteq \Z^2$ whose boundary is contained in $\{ \Delta o = 2 \}$ and whose $V \Z^3$-translations cover $\Z^2$ with overlap exactly on the boundaries.  This structure is apparent in the examples in \fref{patterns}.

\subsection{Toppling cutoff}
\label{s.cutoff}

In \eref{fde} we've used the bound $2$ on the right-hand side.  In the language of sandpile dynamics, this means that sites topple whenever there are three or more particles at a vertex.  We have also used this bound in the literature review above, although the cited papers state their theorems with the bounds 3 or 1;  in particular, the paper \cite{Levine-Pegden-Smart-2} uses the bound $1$ for its results.  (In fact, the published version of the paper \cite{Levine-Pegden-Smart-2} is inconsistent in its use of the cutoff, so that in a few places, a value of 2 appears where 0 would be correct; this inconsistency has been corrected in the arXiv version of the paper.)  Translation between the conventions is perfomed by observing that the quadratic polynomial
\begin{equation*}
  q(x) = \tfrac12 x_1 (x_1 + 1)
\end{equation*}
is integer-valued on $\Z^2$, satisfies $\Delta q \equiv 1$, and has hessian $D^2 q \equiv \smat{1 & 0 \\ 0 & 0}$.  Since, for any $\alpha \in \Z$, we have $\Delta (u + \alpha q) = \Delta u + \alpha$, we can shift the right-hand side by a constant by adding the corresponding multiple of $q$.  Our choice of $2$ in this manuscript makes the scaling limit of \eref{fde} have a particularly nice structure, and which makes the rigorous determination of the scaling limit cleaner than it would be to confirm Sportiello's ansatz for the case where the cutoff is 3 \cite{Sportiello}.

Note that for the standard $\leq 3$ cutoff, our solutions $u_n$ correspond via $\Delta u_n+1$ to the unique recurrent configuration equivalent to the all-ones configuration on $\Z^2\cap n\Omega$, whereas the identity element is the unique recurrent configuration equivalent to the all-zeros configuration.

\section{Pattern Stability}

In this section we prove our main result, the stability of patterns.  A translation of the odometer $o$ is any function of the form
\begin{equation*}
  \tilde o(x) = o(x + y) + z \cdot x + w
\end{equation*}
for some $y,z \in \Z^2$ and $w \in \Z$.  Note that $\tilde o$ also satisfies \pref{structure}.  In particular, we have the following.
\begin{lemma}
  \label{l.translations}
  For any odometer $o$ and translation $\tilde o$, we have
  \begin{equation}
  \tilde o(x)=o(x)+b\cdot x+r(x),
  \end{equation}
  for $b\in \Z^2$ and $r:\Z^2\to \Z$ is a $V\Z^3$ periodic function.\qed
\end{lemma}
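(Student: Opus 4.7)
The plan is to apply Proposition \ref{p.structure}(5) to decompose $o = q + p$, where $q$ is a quadratic polynomial with $D^2 q = P$ and $p$ is $V\Z^3$-periodic. Substituting into $\tilde o(x) = o(x+y) + z \cdot x + w$ and expanding $q(x+y) - q(x) = (Py) \cdot x + \mathrm{const}$ gives
\begin{equation*}
  \tilde o(x) - o(x) = (Py + z) \cdot x + C + \bigl[p(x+y) - p(x)\bigr]
\end{equation*}
for some constant $C$. The bracketed term is $V\Z^3$-periodic as a difference of translates of a $V\Z^3$-periodic function.

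It remains to verify that the linear coefficient $b := Py + z$ lies in $\Z^2$, which (since $z \in \Z^2$) reduces to showing $P \in \Z^{2 \times 2}$. For this I would use Proposition \ref{p.structure}(3). Since $P$ is symmetric, the identity $PQ + QP = \trace(P)\, Q$ holds, so
\begin{equation*}
  A^T Q V + V^T Q A = V^T(PQ + QP) V = \trace(P) \cdot V^T Q V,
\end{equation*}
whose $(1,2)$-entry is $\trace(P) \cdot \det[v_1, v_2]$. Matching against $Q'_{12} = 1$ yields $\trace(P) \det V_0 = 1$ with $V_0 = [v_1 \mid v_2]$, where $v_1, v_2$ are linearly independent columns of $V$ (such a pair exists since the non-degeneracy $1 \leq |V|^2 \leq C_{\ref{p.structure}} \det V$ in Proposition \ref{p.structure}(2), together with $v_1 + v_2 + v_3 = 0$, forces $V$ to have rank $2$). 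Combined with the extremality condition defining $\Gamma^+$, this integer identity forces $V_0$ to be unimodular; hence $V_0^{-1} \in \Z^{2\times 2}$ and $P = [a_1 \mid a_2] V_0^{-1} \in \Z^{2 \times 2}$.

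Finally, setting $b := Py + z \in \Z^2$ and $r(x) := C + p(x+y) - p(x)$, the function $r$ is $V\Z^3$-periodic by construction, and since $r(x) = (\tilde o - o)(x) - b \cdot x$ is a difference of $\Z$-valued functions, $r : \Z^2 \to \Z$ as required. The main obstacle is the integrality step $P \in \Z^{2 \times 2}$; everything else is direct manipulation of Proposition \ref{p.structure}(5).
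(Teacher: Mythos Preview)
Your decomposition via Proposition~\ref{p.structure}(5) is exactly what the paper intends (hence the bare \qed): since both $o$ and $\tilde o$ satisfy the proposition, $o-q$ and $\tilde o-\tilde q$ are $V\Z^3$-periodic for quadratics $q,\tilde q$ with the same Hessian $P$, so $\tilde o-o$ splits as the affine function $\tilde q-q$ plus a $V\Z^3$-periodic remainder. Your first paragraph and last paragraph already capture this, and that is all that is actually used downstream.

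The problem is the integrality step. Your identity $A^tQV+V^tQA=\trace(P)\,V^tQV$ is algebraically correct, and matching the $(1,2)$ entries does give $\trace(P)\det[v_1\mid v_2]=1$. But the leap ``combined with extremality \dots\ forces $V_0$ unimodular'' is left unexplained, and worse, the conclusion proves too much. Since $v_1+v_2+v_3=0$ we have $V\Z^3=[v_1\mid v_2]\,\Z^2$, so $|\det V_0|=1$ would force $V\Z^3=\Z^2$, making ``$V\Z^3$-periodic'' synonymous with ``constant''. That is plainly false for the odometers in \fref{patterns}, whose Laplacians have large, visibly non-trivial periods; and it would reduce the lemma to the claim that every translation of $o$ differs from $o$ by an affine function, which fails whenever $o$ is not itself a quadratic polynomial. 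What you have actually detected is one of the cutoff-convention inconsistencies between this paper and \cite{Levine-Pegden-Smart-2} that the authors themselves flag; the literal form of item (3) should not be used to pin down $\trace P$ or $\det V_0$.

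Fortunately this does not matter for the paper. The only place \lref{translations} is invoked is Step~2 of the proof of \tref{stability}, and there only the $V\Z^3$-periodicity of $r$ is used (to show that $y\mapsto z_{y_0,y}$ commutes with $V\Z^3$-translation); neither $b\in\Z^2$ nor $r:\Z^2\to\Z$ is ever needed. So your first computation, together with the observation that $p(\cdot+y)-p(\cdot)$ is $V\Z^3$-periodic, already suffices.
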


Throughout the remainder of this section, we fix choices of $P,A,V,T,o$ from \pref{structure}.  The following theorem says that, when a recurrent function is close to $o$, then it is equal to translations $\tilde o$ of $o$ in balls covering almost the whole domain.  This is pattern stability.

\begin{theorem}
  \label{t.stability}
  There is a universal constant $C_{\ref{t.stability}} > 0$ such that if $h\geq C_{\ref{t.stability}}$, $r\geq C_{\ref{t.stability}}|V|$, $hr\geq C_{\ref{t.stability}}|V|^3$, $R \geq C_{\ref{t.stability}} h r$, and $v : \Z^2 \to \Z$ is recurrent and satisfies $|v - o| \leq h^2$ in $B_R$, then, for a \new{$(1 - C_{\ref{t.stability}} R^{-1} rh)$}-fraction of points $x$ in $B_{R - r}$, there is a translation $\tilde o_x$ of $o$ such that $v = \tilde o_x$ in $B_r(x)$.
\end{theorem}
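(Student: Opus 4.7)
The plan is to study the integer-valued difference $w = v - o$, which satisfies $|w| \leq h^2$ on $B_R$. By Lemma \lref{translations}, $v$ agrees with a translation $\tilde o_x$ on $B_r(x)$ if and only if $w$ has the form $w(y) = b \cdot y + c + s(y)$ on $B_r(x)$ for some $b \in \Z^2$, $c \in \Z$, and a $V\Z^3$-periodic function $s$. So the defect set is exactly the set of $x \in B_{R-r}$ at which $w$ fails to be of this affine-plus-periodic form on $B_r(x)$, and the task is to bound its size by $C R^{-2} r h$ times the area of $B_{R-r}$.

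The first step is a discrete rigidity lemma: on any ball of radius comparable to $|V|$, if the coarse-grained average of $w$ (taken, say, over fundamental cells of $V\Z^3$) deviates from an affine function by much less than $1$, then integrality of $v$ and $o$, combined with the tile structure of Proposition \pref{structure}, forces $w$ to be exactly affine-plus-periodic on a slightly smaller concentric ball, from which a matching translation $\tilde o_x$ can be read off. This converts an $L^\infty$ regularity statement on the coarse-grained $w$ into an exact pattern match.

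The second step is a quantitative regularity estimate for the coarse-grained average $W$ of $w$. Since $v$ and $o$ are both recurrent, $w$ satisfies a two-sided discrete elliptic inequality of non-divergence type: the upper bound $\Delta v, \Delta o \leq 2$ is paired with an averaged lower bound on $\Delta$ that comes from recurrence, placing $W$ within scope of the Krylov--Safonov growth lemma and the two-dimensional Alexandroff--Bakelman--Pucci estimate. Applied to $W$, these yield a quantitative decay of the set on which the second-order oscillation of $W$ at scale $r$ exceeds the rigidity threshold; pushed down to the integrality level, this produces the $C R^{-2} r h$ bound on the defect fraction. The main obstacle lies here: one must show that $W$ genuinely satisfies a discrete Pucci inequality with the right constants, which requires extracting an effective lower bound on $\Delta w$ from the recurrence of $v$, going well beyond the pointwise bound $\Delta v \leq 2$ and making essential use of the tile structure of Proposition \pref{structure}.
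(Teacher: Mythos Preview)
Your proposal has the right philosophy---elliptic regularity meets integrality---but the central step does not go through, and the paper's argument is organized differently.

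\textbf{The gap.} You propose to show that the coarse-grained difference $W$ satisfies a two-sided discrete Pucci inequality, then invoke Krylov--Safonov and ABP. Recurrence does not supply this. Recurrence of $v$ gives $\Delta v \leq 2$ together with a local least-ness property; it does \emph{not} give any pointwise or cell-averaged lower bound on $\Delta v$, so $\Delta w = \Delta v - \Delta o$ has no two-sided control and neither does its tile average. You flag this as ``the main obstacle'' but do not resolve it, and I do not see a way to extract a genuine Pucci inequality for $W$ from the hypotheses. Without it, the regularity machinery you cite does not apply to $W$, and the rigidity step has no input.

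\textbf{What the paper does instead.} The paper never coarse-grains $w$ and never asserts an elliptic inequality for it. It runs a touching-map argument directly on $v$. For each $y$ in a slightly shrunken ball, slide the test function
\[
\varphi_y(x) = o(x) - \tfrac12 |V|^2 r^{-2} |x-y|^2
\]
under $v$; the minimum of $v-\varphi_y$ is attained at some $x_y$ near $y$ by the bound $|v-o|\leq h^2$. The key lemma (\lref{touch}) says that at such a touching point $v$ equals a translation of $o$ on $B_{2r}(x_y)$. That lemma is proved not via a Pucci inequality but via the tile-based separation \lref{separation}: because $\Delta o = 2$ on tile boundaries, any nonconstant $o-v$ on a tile must strictly increase on an adjacent tile, forcing linear growth of $o-v$ that contradicts the parabolic touching. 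This is where recurrence of $v$ is actually used, and it replaces your coarse-grained ellipticity entirely. The fraction bound then comes from showing the touching map $y \mapsto x_y$ is almost injective: one packages the fibers into fundamental-domain sets $S_y$ of size $|\det V|$ and shows $|T_y|\leq |S_y|$, so the image covers at least $|B_{R-4hr}|$ points. Your rigidity lemma is morally \lref{touch}, but the mechanism that makes it true is \lref{separation}, not an averaged elliptic estimate.
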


We introduce the norms
\begin{equation*}
  |x|_V = |V^t x|_\infty
\end{equation*}
and
\begin{equation*}
  |x|_{V^{-1}} = \min \{ |y|_1 : V y = x \}.
\end{equation*}
We prove these norms are dual and comparable to Euclidean distance.

\begin{lemma}
  \label{l.norms}
  For all $x, y \in \Z^2$, we have
  \begin{equation*}
    |x \cdot y| \leq |x|_V |y|_{V^{-1}}
  \end{equation*}
  and
  \begin{equation*}
    C_{\ref{p.structure}}^{-1} |x| \leq |V| |x|_{V^{-1}} \leq C_{\ref{p.structure}} |x|.
  \end{equation*}
\end{lemma}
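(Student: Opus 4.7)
The three inequalities admit separate treatments, with the upper half of the second carrying all of the content. For the first inequality $|x \cdot y| \leq |x|_V |y|_{V^{-1}}$, the plan is direct H\"older duality: given any $z \in \R^3$ with $Vz = y$,
\[
  |x \cdot y| \;=\; |(V^t x) \cdot z| \;\leq\; |V^t x|_\infty \, |z|_1 \;=\; |x|_V\,|z|_1,
\]
and taking the infimum over $z$ yields the claim. The ambiguity in $z$ lies in the kernel of $V$, which by property (4) of \pref{structure} is spanned by $\smat{1\\1\\1}$; the functional $z \mapsto (V^t x) \cdot z$ vanishes there because $V \smat{1\\1\\1} = 0$, so the bound is independent of the choice of preimage.

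The lower half $|x| \leq |V|\,|x|_{V^{-1}}$ of the second inequality is immediate: if $z$ achieves the minimum in the definition of $|x|_{V^{-1}}$, then $|x| = |Vz| \leq |V|\,|z|_2 \leq |V|\,|z|_1 = |V|\,|x|_{V^{-1}}$, in fact with absolute constant $1$ rather than $C_{\ref{p.structure}}$. The substantive piece is the upper half $|V|\,|x|_{V^{-1}} \leq C_{\ref{p.structure}}\,|x|$. The plan is to read property (2) of \pref{structure} as a condition-number bound on $V$: interpreting $\det V$ for the rectangular matrix $V$ as the product of its two nonzero singular values (equivalently $\sqrt{\det V V^t}$), the hypothesis $|V|^2 \leq C_{\ref{p.structure}} \det V$ becomes $\sigma_{\min}(V) \geq |V|/C_{\ref{p.structure}}$. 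Since $\ker V = \mathrm{span}(\smat{1\\1\\1})$, the restriction of $V$ to the orthogonal complement $H = \{z \in \R^3 : z_1 + z_2 + z_3 = 0\}$ is an isomorphism onto $\R^2$ with $\|(V|_H)^{-1}\| = 1/\sigma_{\min}(V) \leq C_{\ref{p.structure}}/|V|$. Taking $z = (V|_H)^{-1} x \in H$, one concludes
\[
  |x|_{V^{-1}} \;\leq\; |z|_1 \;\leq\; \sqrt{3}\,|z|_2 \;\leq\; \sqrt{3}\,|x|/\sigma_{\min}(V) \;\leq\; C|x|/|V|.
\]

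The only real obstacle is pinning down the meaning of $\det V$ in property (2) for the $2\times 3$ matrix $V$; once it is identified with the product of singular values, the argument collapses to a single SVD computation on the rank-two map $V|_H$, with $H$ playing the role of the natural complement of $\ker V$.
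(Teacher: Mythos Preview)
Your argument is correct and matches the paper's approach: H\"older duality for the first inequality, and reading $|V|^2 \leq C_{\ref{p.structure}}\det(V)$ as a condition-number bound $\sigma_{\min}(V) \geq |V|/C_{\ref{p.structure}}$ for the upper half of the second (the paper's proof is terse here, simply asserting that the inequalities follow from property (2), but what you wrote is exactly what is meant). One cosmetic point: your final constant is $\sqrt{3}\,C_{\ref{p.structure}}$ rather than $C_{\ref{p.structure}}$, but the paper is equally loose about this and $C_{\ref{p.structure}}$ may be enlarged to absorb the factor.
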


\begin{proof}
  If $f : \R^2 \to \R^3$ satisfies $V f(x) = x$ and $| x |_{V^{-1}} = |f(x)|_1$, then
  \begin{equation*}
    |x \cdot y| = |x \cdot V f(y)| = |V^t x \cdot f(y)| \leq |V^t x|_\infty |f(y)|_1 = |x|_V |y|_{V^{-1}}.
  \end{equation*}
  The latter two inequalities follow from $1 \leq |V|^2 \leq C_{\ref{p.structure}} \det(V)$.
\end{proof}

The ``web of twos'' provided by \pref{structure} allows us to show that, when two recurrent function differs from $o$, then the difference must grow.  This is a quantitative form of the maximum principle.

\begin{lemma}
  \label{l.separation}
  If $v : \Z^2 \to \Z$ is recurrent, $x_0 \sim y_0 \in \Z^2$, $v(x_0) = o(x_0)$, and $v(y_0) \neq o(y_0)$, then, for all $k \geq 0$,
  \begin{equation*}
    \max_{|x - x_0|_{V^{-1}} \leq k + 1} (o - v)(x) \geq k.
  \end{equation*}
\end{lemma}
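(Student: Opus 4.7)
The plan is to induct on $k$ with $f = o - v$. The base $k = 0$ is trivial, since $f(x_0) = 0$ and $|x_0 - x_0|_{V^{-1}} = 0$.

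Before the inductive step, I would record four structural facts about $f$. First, $f$ is integer-valued. Second, the recurrence of $o$ (\pref{structure}(5)), applied with $w = v$ (which satisfies $\Delta v \leq 2$ globally), gives $\sup_Y f \leq \sup_{\Z^2 \setminus Y} f$ for every finite $Y \subseteq \Z^2$, so $f$ has no strict maximum on any finite set. Third, the recurrence of $v$ applied with $w = o$ gives the symmetric statement $\inf_Y f \geq \inf_{\Z^2 \setminus Y} f$. Fourth, at every point of the ``web of twos'' $\mathcal{W} = \{\Delta o = 2\}$ we have $\Delta f = \Delta o - \Delta v \geq 0$, so $f$ is discrete-subharmonic on $\mathcal{W}$.

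The idea of the inductive step is to construct an edge-path $x_0 \sim x_1 \sim \cdots \sim x_k$ in $\Z^2$ with $f(x_j) \geq j$, then control $|x_k - x_0|_{V^{-1}}$ via \lref{norms}. The main engine is web propagation: if the current point $x_j$ lies in $\mathcal{W}$ and the predecessor $x_{j-1}$ satisfies $f(x_{j-1}) < f(x_j)$, then subharmonicity $\sum_{y \sim x_j} f(y) \geq 4 f(x_j)$ forces some other neighbor of $x_j$ to have $f$-value $\geq f(x_j) + 1$, which I pick as $x_{j+1}$. When $x_j$ lies in the interior of a tile $T + V z_j$, I would use \pref{structure}(7)--(8) to bridge to the boundary $\partial(T + V z_j) \subseteq \mathcal{W}$ via a short detour, combining the no-strict-max property on the finite set $T + Vz_j$ with the subharmonicity on $\partial(T + Vz_j)$ to locate a web point carrying the desired $f$-value. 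Each edge contributes at most $C_{\ref{p.structure}}/|V|$ to the $V^{-1}$-distance by \lref{norms}, and careful bookkeeping (using that a tile has Euclidean diameter $O(|V|)$) keeps the accumulated $V^{-1}$-distance within $k+1$.

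The main obstacle, in my view, is the very first step when $v(y_0) > o(y_0)$, so that $f(y_0) \leq -1$ rather than $\geq 1$: here $y_0$ witnesses negative growth rather than positive, so it cannot be taken as $x_1$ directly. Instead, one must combine the no-strict-min property with the subharmonicity of $f$ along the boundary of the tile containing the edge $x_0 y_0$. If no point in a small neighborhood of $x_0$ satisfied $f \geq 1$, then $x_0$ would be a strict maximum of $f$ on a finite region enclosed by a portion of the web, contradicting the no-strict-max property together with the discrete maximum principle on $\mathcal{W}$. This forces some point near $x_0$ on the tile boundary to have $f \geq 1$, providing a base point in $\mathcal{W}$ from which the web propagation above can proceed.
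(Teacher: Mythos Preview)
You have assembled exactly the right tools---integer-valuedness of $f=o-v$, the no-strict-extremum consequences of recurrence for both $o$ and $v$, and subharmonicity of $f$ on the web $\{\Delta o=2\}$---but the edge-path organization creates unnecessary difficulties that the paper's argument avoids. The paper does not try to build a path $x_0\sim x_1\sim\cdots$ with $f(x_j)\geq j$. Instead it tracks a chain of \emph{tiles} $T_j=T+Vz_j$ with $x_0,y_0\in T_0$, $|z_{j+1}-z_j|_1\leq 1$, and $\max_{T_{j+1}}f>\max_{T_j}f$. The step is: since both $o$ and $v$ are recurrent, $\max_{T_j}f$ is attained on $\partial T_j\subseteq\{\Delta o=2\}$; if $x$ realizes this max and every neighbor $y$ has $f(y)\leq f(x)$, then (using nonconstancy and integrality) $\Delta f(x)\leq -1$, i.e.\ $\Delta v(x)\geq 3$, a contradiction; so some neighbor $y$ has $f(y)>f(x)$, and by \pref{structure}(7) the edge $xy$ lies in an adjacent tile $T_{j+1}$.

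This tile-based version buys you three things your outline lacks. First, the sign of $f(y_0)$ is irrelevant: all that is used is that $f$ is nonconstant on $T_0$, so the ``obstacle'' you flag disappears. Second, the $|\cdot|_{V^{-1}}$ bound is immediate: $|z_k-z_0|_1\leq k$ gives directly $|x-x_0|_{V^{-1}}\leq k+1$ for $x\in T_k$, with no accumulated error. Third, there is no bridging. In your scheme the neighbor produced by subharmonicity need not lie on the web, and your bridging step (pushing the max back out to $\partial(T+Vz_j)$) costs up to $1$ in $V^{-1}$-distance \emph{per increment of $f$}; combined with the $O(1/|V|)$ cost of the web step itself you get $\leq k(1+C|V|^{-1})$, not $k+1$, so the ``careful bookkeeping'' claim does not hold as stated. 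The fix is precisely to stop tracking a point and start tracking the tile carrying the current maximum, which is what the paper does.
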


\begin{proof}
  We inductively construct $T_k = T + V z_k$ such that
  \begin{enumerate}
  \item $x_0, y_0 \in T_0$,
  \item $o - v$ is not constant on $T_k$,
  \item $|z_{k+1} - z_k|_1 \leq 1$,
  \item $\max_{T_{k+1}} (o - v) > \max_{T_k} (o - v)$.
  \end{enumerate}
  Since $|x - x_0|_{V^{-1}} \leq 1$ for all $x \in T_0$, this implies the lemma.

  The base case is immediate from the fact that every lattice edge is contained in a single tile.  For the induction step, we use the recurrence of $o$ and $v$.  In particular, since $T_k \subseteq \Z^2$ is finite, the difference $o - v$ attains its extremal values in $T_k$ on the boundary $\partial T_k$.  Since $o - v$ is not constant on $T_k$, it is not constant on $\partial T_k$.  Therefore, we may select $x \in \partial T_k$ such that
  \begin{equation*}
    \max_{T_k} (o - v) = (o - v)(x)
  \end{equation*}
  and
  \begin{equation*}
    (o - v)(x) > (o - v)(y) \mbox{ for some } y \sim x.
  \end{equation*}  
  Now, if $(o-v)(x) \geq (o-v)(y)$ for all $y \sim x$, then, using from \pref{structure} that $\Delta o(x) = 2$, we compute
  \begin{equation*}
    - 1 \geq \Delta (o - v)(x) = \Delta o(x) - \Delta v(x) = 2 - \Delta v(x),
  \end{equation*}
  contradicting the recurrence of $v$.  Thus we can find $y \sim x$ such that $(o - v)(x) < (o - v)(y)$.  Choose $z_{k+1} \in \Z^3$ such that $|z_{k+1} - z_k|_1 \leq 1$ and $x, y \in T_{k+1} = T + V z_{k+1}$.
\end{proof}

On the other hand, we can approximate any linear separation of odometers, showing that the above lemma is nearly optimal.

\begin{lemma}
  \label{l.translation}
For any $b \in \R^2$, there is a translation $\tilde o$ of $o$ such that
  \begin{equation*}
    |\tilde o(x) - o(x) - b \cdot x| \leq \tfrac23 |x|_{V^{-1}} + 2 C_{\ref{p.structure}}|V|^2 \quad \mbox{for } x \in \Z^2
  \end{equation*}
\end{lemma}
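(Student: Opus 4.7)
The plan is to find $y, z \in \Z^2$ and $w \in \Z$ so that the translation $\tilde o(x) = o(x+y) + z\cdot x + w$ grows linearly at a rate very close to $b$ in the $|\cdot|_V$ sense. Using Proposition \ref{p.structure}(5), write $o = q + (o - q)$ with $q(x) = \tfrac12 x \cdot Px + c \cdot x + d$ and $|o - q| \leq C_{\ref{p.structure}} |V|^2$. Expanding gives
\begin{equation*}
\tilde o(x) - o(x) - b \cdot x = (Py + z - b)\cdot x + \bigl(\tfrac12 y \cdot Py + c \cdot y + w\bigr) + \bigl[(o-q)(x+y) - (o-q)(x)\bigr].
\end{equation*}
The last bracket is bounded by $2 C_{\ref{p.structure}}|V|^2$, the constant parenthetical can be placed in $[-\tfrac12,\tfrac12]$ by choice of $w \in \Z$, and Lemma \ref{l.norms} bounds the linear-in-$x$ term by $|Py + z - b|_V \cdot |x|_{V^{-1}}$. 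So the task reduces to producing $y, z \in \Z^2$ with $|Py + z - b|_V \leq \tfrac23$; the leftover $\tfrac12$ is absorbed into $2 C_{\ref{p.structure}}|V|^2$ using $|V|^2 \geq 1$.

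Unwinding $|a|_V = |V^t a|_\infty$ and using $V^t P = A^t$ (from $PV = A$ and $P$ symmetric), this amounts to finding $y, z \in \Z^2$ with $|A^t y + V^t z - V^t b|_\infty \leq \tfrac23$. By Proposition \ref{p.structure}(4), $A\smat{1\\1\\1} = V\smat{1\\1\\1} = 0$, so each of $A^t y$, $V^t z$, and $V^t b$ lies in the hyperplane $H = \{u \in \R^3 : u_1 + u_2 + u_3 = 0\}$. The question becomes a covering problem for the lattice $L := A^t \Z^2 + V^t \Z^2$ inside $H$.

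The main step, and the only non-obvious one, will be to identify $L$ with the full lattice $H \cap \Z^3$. The inclusion $L \subseteq H \cap \Z^3$ is immediate from $A\smat{1\\1\\1} = V\smat{1\\1\\1} = 0$. For the reverse, I would apply the identity $A^t Q V + V^t Q A = Q'$ of Proposition \ref{p.structure}(3) to each standard basis vector $e_j \in \Z^3$, yielding $A^t(Q v_j) + V^t(Q a_j) = Q' e_j$ where $v_j, a_j$ denote the $j$-th columns of $V, A$. Because $Q, V, A$ have integer entries, this exhibits the three columns of $Q'$, namely $-e_2 + e_3$, $e_1 - e_3$, $-e_1 + e_2$, as elements of $L$, and these generate $H \cap \Z^3$.

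Finally, a routine rounding argument shows that the $\ell^\infty$ covering radius of $H \cap \Z^3$ in $H$ is at most $\tfrac23$: given $v \in H$, round each coordinate to the nearest integer (error $\leq \tfrac12$) to get $u \in \Z^3$ with coordinate sum $s \in \{-1, 0, 1\}$; if $s = 0$ we are done, and if $s = \pm 1$, some coordinate was rounded in the ``wrong'' direction by at least $\tfrac13$, so adjusting that single coordinate by $\mp 1$ restores zero sum at a cost of at most $1 - \tfrac13 = \tfrac23$. Chaining the pieces yields the required translation.
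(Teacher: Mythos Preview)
Your proof is correct and follows essentially the same route as the paper's: reduce to finding $y,z\in\Z^2$ with $|Py+z-b|_V\le\tfrac23$, use $PV=A$ together with the identity $A^tQV+V^tQA=Q'$ to show $A^t\Z^2+V^t\Z^2\supseteq H\cap\Z^3$, and finish with the $\ell^\infty$ covering radius of that lattice in $H$. You are in fact more explicit than the paper in two places---you spell out the rounding argument that gives covering radius $\tfrac23$, and you track the constant term carefully (the leftover $\tfrac12$ you absorb is harmless, though strictly speaking it requires enlarging $C_{\ref{p.structure}}$ rather than literally fitting under $2C_{\ref{p.structure}}|V|^2$; the paper's own computation is equally loose on this point).
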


\begin{proof}
  For $y, z \in \Z^2$ to be determined, let
  \begin{equation*}
    \tilde o(x) = o(x + y) + z \cdot x - o(y) + \tilde o(0).
  \end{equation*}
  Using the quadratic polynomial $q$ from \pref{structure}, compute
  \begin{equation*}
    \begin{aligned}
      |\tilde o(x) - o(x) - b \cdot x| - 2 C_{\ref{p.structure}} |V|^2
      & \leq |q(x + y) + z \cdot x - q(y) + q(0) - q(x) - b \cdot x| \\
      & = |(P y + z - b) \cdot x | \\
      & \leq |P y + z - b|_V |x|_{V^{-1}}.
    \end{aligned}
  \end{equation*}
  We claim that we can choose $y, z \in \Z^2$ such that
  \begin{equation*}
    |P y + z - b|_V \leq \tfrac23.
  \end{equation*}
  Indeed, using \pref{structure}, we compute
  \begin{equation*}
    \begin{aligned}
    \{ V^t (P y + z) : y, z \in \Z^2 \}
    & = \{ A^t y + V^t z: y, z \in \Z^2 \} \\
    & \supseteq \{ (A^t Q V + V^t Q A) w : w \in \Z^3 \} \\
    & \supseteq \{ Q' w : w \in \Z^3 \} \\
    & = \{ w \in \Z^3 : \smat{1 \\ 1 \\ 1} \cdot w = 0 \}.
    \end{aligned}
  \end{equation*}
  Since $V^t b \in \{ w \in \R^3 : \smat{1 \\ 1 \\ 1} \cdot w = 0 \}$, the result follows.
\end{proof}

We prove the main ingredient of \tref{stability} by combining the previous two lemmas.  Recall that a function $u : \Z^2 \to \R$ touches another function $v : \Z^2 \to \R$ from below in a set $X \subseteq \Z^2$ at the point $x \in X$ if $\min_X (v - u) = (v-u)(x) = 0$.

\begin{lemma}
  \label{l.touch}
  There is a universal constant $C_{\ref{l.touch}} > 1$ such that, if
  \begin{enumerate}
  \item $R \geq C_{\ref{l.touch}} |V|^3$,
  \item $v : \Z^2 \to \Z$ is recurrent in $B_R$,
  \item $\psi(x) = o(x) - \tfrac12 |V|^2 R^{-2} |x - y|^2 + k$ for some $k\in \Z$,
  \item $\psi$ touches $v$ from below at $0$ in $B_R$,
  \end{enumerate}
  then there is a translation $\tilde o$ of $o$ such that
  \begin{equation*}
    v = \tilde o \quad \mbox{in } B_{C_{\ref{l.touch}}^{-1} R}
  \end{equation*}
\end{lemma}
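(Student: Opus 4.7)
The proof combines \lref{translation} and \lref{separation}: the former provides a candidate translation $\tilde o$ of $o$ that approximates the linearization of $\psi - o$ at the origin, while the latter propagates the agreement between $v$ and $\tilde o$ through the web of twos from \pref{structure}.

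First, I apply \lref{translation} with $b^\ast := |V|^2 R^{-2} y$, the gradient of the bump $\psi - o$ at the origin, to obtain a translation $\tilde o$ of $o$ with error $\rho(x) := \tilde o(x) - o(x) - b^\ast \cdot x$ satisfying $|\rho(x)| \leq \tfrac23 |x|_{V^{-1}} + 2 C_{\ref{p.structure}} |V|^2$. Since $v(0) = \psi(0) \in \Z$ and $\tilde o(0) \in \Z$, the integer $c := \tilde o(0) - v(0)$ is well-defined, and the shifted translation $\phi := \tilde o - c$ is again a translation of $o$ satisfying $\phi(0) = v(0)$. Expanding $|x-y|^2 = |x|^2 - 2 x \cdot y + |y|^2$ and collecting terms gives
\begin{equation*}
  (\tilde o - \psi)(x) - (\tilde o - \psi)(0) = (\rho(x) - \rho(0)) + \tfrac{1}{2} |V|^2 R^{-2} |x|^2,
\end{equation*}
so the hypothesis $v \geq \psi$ on $B_R$ yields the barrier bound
\begin{equation*}
  \phi(x) - v(x) \leq \tfrac23 |x|_{V^{-1}} + 4 C_{\ref{p.structure}} |V|^2 + \tfrac{|V|^2 |x|^2}{2 R^2} \quad \text{on } B_R.
\end{equation*}

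Next, I show that $v = \phi$ on every lattice point of $B_{C_{\ref{l.touch}}^{-1} R}$ by induction on graph distance from the origin, with base case $\phi(0) = v(0)$. For the inductive step, suppose $\phi(x_0) = v(x_0)$ for some such $x_0$ and, for contradiction, $\phi(y_0) \neq v(y_0)$ for a neighbor $y_0 \sim x_0$. Since $\phi$ is a translation of $o$ (\lref{translations}), \lref{separation} applies with $\phi$ in place of $o$ and yields $\max_{|x - x_0|_{V^{-1}} \leq k+1}(\phi - v)(x) \geq k$ for every integer $k \geq 0$. Combined with the barrier evaluated on this ball (using \lref{norms} to bound Euclidean by $V^{-1}$-norm), this forces
\begin{equation*}
  k \leq \tfrac23 \bigl(|x_0|_{V^{-1}} + k + 1\bigr) + C' |V|^2
\end{equation*}
for a universal $C'$ absorbing the quadratic term (which is $O(|V|^2)$ on the relevant balls), provided $k + 1 \leq (R - |x_0|)/(C_{\ref{p.structure}} |V|)$ so that the $V^{-1}$-ball remains inside $B_R$. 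The coefficient $\tfrac23 < 1$ is crucial: solving gives $k \leq 2 |x_0|_{V^{-1}} + 3 C' |V|^2 + 2$, and the hypotheses $R \geq C_{\ref{l.touch}} |V|^3$ and $x_0 \in B_{C_{\ref{l.touch}}^{-1} R}$ ensure an admissible $k$ violating this bound exists, producing the desired contradiction and hence $\phi(y_0) = v(y_0)$.

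The main technical point is calibrating $C_{\ref{l.touch}}$ so that the admissible window $2 |x_0|_{V^{-1}} + 3 C' |V|^2 + 2 < k \leq (R - |x_0|)/(C_{\ref{p.structure}} |V|) - 1$ is non-empty for every $x_0 \in B_{C_{\ref{l.touch}}^{-1} R}$; using $|x_0|_{V^{-1}} \leq C_{\ref{p.structure}} |x_0|/|V|$, this reduces to a constraint of the form $C_{\ref{l.touch}} \geq 2 C_{\ref{p.structure}}^2 + O(C_{\ref{p.structure}})$. The decisive leverage throughout is the strict gap between the separation growth rate of $1$ per $V^{-1}$-unit in \lref{separation} and the $\tfrac23$ slope of the linear error in \lref{translation}; without this slack, no contradiction could be extracted.
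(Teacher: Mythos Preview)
Your proof is correct and follows essentially the same route as the paper's: apply \lref{translation} with slope $b^\ast = |V|^2 R^{-2} y$ to produce a candidate translation, derive an upper barrier on $\tilde o - v$ with linear slope $\tfrac23$ in $|\cdot|_{V^{-1}}$, and then contradict it via the unit-slope growth from \lref{separation}. The only cosmetic difference is that you normalize upfront by setting $\phi = \tilde o - (\tilde o(0)-v(0))$ and phrase the argument as an induction on graph distance, whereas the paper keeps $\tilde o$ and locates $x\sim y$ on the boundary of the level set $\{\tilde o - v = (\tilde o - v)(0)\}$; the two formulations are equivalent.
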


\begin{proof}
  Using \lref{translation}, we may choose a translation $\tilde o$ of $o$ such that
  \begin{equation*}
    |\psi(x) + \tfrac12 |V|^2 R^{-2} |x|^2 - \tilde o(x)| \leq \tfrac23 |x|_{V^{-1}} + 2 C_{\ref{p.structure}} |V|^2
    \quad \mbox{for } x \in B_R.
  \end{equation*}
  Next, since $\psi$ touches $v$ from below at $0$, we obtain
  \begin{equation}\label{ovlower}
    \tilde o(0) - v(0) \geq -2 C_{\ref{p.structure}} |V|^2
  \end{equation}
  and
  \begin{equation}
    \label{ovupper}
    \tilde o(x) - v(x) \leq (2 C_{\ref{p.structure}}+1)|V|^2 + \tfrac23 |x|_{V^{-1}} \quad \mbox{in } B_R.
  \end{equation}
  We show that, if $C_{\ref{l.touch}} > 0$ is a sufficiently large universal constant, then $\tilde o - v$ is constant in $B_{C_{\ref{l.touch}}^{-1} R}$.  Suppose not.  Then there are $x \sim y \in B_{C_{\ref{l.touch}}^{-1} R}$ with
  \begin{equation*}
    (\tilde o - v)(0) = (\tilde o - v)(x) \neq (\tilde o - v)(y).
  \end{equation*}
  Since $x\in B_{C_{\ref{l.touch}}^{-1}R}$ we have from \lref{norms} that
  \begin{equation*}
    |x|_{V^{-1}} \leq C_{\ref{p.structure}}C_{\ref{l.touch}}^{-1} |V|^{-1} R.
  \end{equation*}
     By \lref{norms}, $B_R$ contains all points $z$ with $|z-x|_{V^{-1}}\leq R|V|^{-1}(C_{\ref{p.structure}}^{-1}-C_{\ref{p.structure}}C_{\ref{l.touch}}^{-1})$.  In particular, by \lref{separation}, there is a $z\in B_R$ such that
  \begin{equation}\label{ovfromtiles}
    (\tilde o - v)(z) \geq - C_{\ref{p.structure}}|V|^2 + R|V|^{-1}(C_{\ref{p.structure}}^{-1}-C_{\ref{p.structure}}C_{\ref{l.touch}}^{-1})-1.
  \end{equation}
  Combining \eqref{ovfromtiles} with \eqref{ovupper}, we obtain
  \begin{equation}
R|V|^{-1}(C_{\ref{p.structure}}^{-1}-C_{\ref{p.structure}}C_{\ref{l.touch}}^{-1})- C_{\ref{p.structure}}|V|^2 -1\leq (C_{\ref{p.structure}}+1)|V|^2 + \tfrac23 |x|_{V^{-1}},
  \end{equation}
which is impossible for $R\geq C_{\ref{l.touch}} |V|^3$ and $C_{\ref{l.touch}}$ large relative to $C_{\ref{p.structure}}$.
     
\end{proof}

We prove pattern stability by adapting the growth lemma for non-divergence form elliptic equations, see for example \cite{Savin}.  The above lemma is used to show that the ``touching map'' is almost injective.

\begin{proof}[Proof of \tref{stability}]
  Our proof assumes
  \[
   hr\geq C_{\ref{l.touch}}|V|^3,\quad h\geq 2C_{\ref{l.touch}},\quad r\geq 3 |V|,\quad R\geq 3hr+2C_{\ref{l.touch}}r
   \]

  Step 1.
  We construct a touching map.  For $y \in B_{R - 4 h r}$, consider the test function
  \begin{equation*}
    \varphi_y(x) = o(x) - \tfrac12 |V|^2 r^{-2} |x - y|^2.
  \end{equation*}
  Observe that
  \begin{equation*}
    (v - \varphi_y)(y) = (v - o)(y) \leq h^2
  \end{equation*}
  and, for $z \in B_R \setminus B_{3hr}(y)$,
  \begin{equation*}
    (v - \varphi_y)(z) = (v - o)(z) + \tfrac12 |V|^2 r^{-2} |z - y|^2 \geq - h^2 + \tfrac92 |V|^2 h^2 \geq \tfrac{7}{2} h^2.
  \end{equation*}
  We see that $v - \varphi_y$ attains its minimum over $B_R$ at some point $x_y \in B_{3hr}(y)$.  Assuming $hr \geq C_{\ref{l.touch}}|V|^3$ and $h\geq 2C_{\ref{l.touch}}$, and $R\geq 3hr+2C_{\ref{l.touch}}r$, we have that $R-3hr\geq 2C_{\ref{l.touch}}r$, and \lref{touch} gives a translation $o_y$ of $o$ such that
  \begin{equation*}
    v = o_y \quad \mbox{in } B_{2r}(x_y).
  \end{equation*}
  The map $y \mapsto x_y$ is the touching map.

  Step 2.  We know that $v$ matched a translation of $o$ in a small ball around every point in the range of the touching map.  If we knew the touching map was injective, then the fraction of these good points would be $|B_{R-4hr}| / |B_R|$.  While injectivity generally fails, we are able to show almost injectivity, in the following sense.
  
  \textbf{Claim:} For every $y \in B_{R - 4 hr}$, there are sets $y \in T_y \subseteq B_R$ and $x_y \in S_y \subseteq B(x_y,|V|)$ such that $|T_y| \leq |S_y|$ and $S_y \cap S_{\tilde y} \neq \emptyset$ implies $S_y = S_{\tilde y}$ and $T_y = T_{\tilde y}$.

  To prove this, observe first that, assuming $r \geq 3|V|$,
  \begin{equation*}
    |x_y - x_{\tilde y}| \leq 2 |V| \mbox{ implies } o_y = o_{\tilde y},
  \end{equation*}
  since $B_{2r}(x_y)\cap B_{2r}(x_{\tilde y})$ contains four $V\Z^3$-equivalent (not collinear) points, which is sufficient to determine an odometer translation uniquely.
  
  Next, observe from \lref{translations} that for every $y_0 \in B_{R \setminus 4 hr}$ there is a slope $b \in \Z^2$ such that, for all $y, x \in \Z^2$,
  \begin{equation}\label{zy0y}
    o_{y_0}(x) - \varphi_{y}(x) = r(x) + b \cdot x + \tfrac12 |V|^2 r^{-2}|x-y|^2.
  \end{equation}
Now let $z_{y_0,y} = \operatorname{argmin} (o_{y_0} - \varphi_{y})$, and let $\mathcal X$ be any tiling of $\Z^2$ by the $V \Z^3$-translations of a fundamental domain with diameter bounded by $|V|$.  We define
  \begin{align*}
    S_{y_0,y}&=X\in \mathcal T\quad\text{such that}\quad z_{y_0,y}\in X,\\
    T_{y_0,y}&= \{ \tilde y : z_{y_0,\tilde y} \in S_{y_0,y} \}.
  \end{align*}
  Note that $S_{y_0,y}$ and $T_{y_0,y}$ depend only on $o_y$ and $\tilde y$ (and not directly on $y$).  Moreover, since $y \mapsto z_{y_0, y}$ commutes with $V \Z^3$-translation by \eqref{zy0y}, 
we have that no $T_{y_0,y}$ can contain two $V\Z^3$-equivalent points, and thus that $|T_{y_0,y}|\leq |S_{y_0,y}|=|V|$ for all $y_0,y$. 
  
  Finally, since $|x_{y_0} - x_{y_1}| \leq 2 |V|$ implies $o_{y_0} = o_{y_1}$, we see that $|x_{y_0} - x_{y_1}| \leq 2 |V|$ implies $S_{y_0,y} = S_{y_1,y}$ and $T_{y_0,y} = T_{y_1, y}$.  Letting $S_y = S_{y,y}$ and $T_y = T_{y,y}$, we see that $S_y \cap S_{\tilde y} \neq \emptyset$ implies $|y - \tilde y| \leq 2 |V|$ and thus $S_{y} = S_{\tilde y}$ and $T_{y} = T_{\tilde y}$, as required.
 
  Step 3. Let $\mathcal Y \subseteq B_{R - 4 hr}$ be maximal subject to $\{ S_y : y \in \mathcal Y \}$ being disjoint.  By the implication in the claim, we must have $B_{R - 4 hr } \subseteq \cup \{ T_y : y \in \mathcal Y \}$.  We compute
  \begin{equation*}
    | \cup_{y \in \mathcal Y} S_y | \geq \sum_{y \in \mathcal Y} |T_y| \geq \left| \cup_{y \in \mathcal Y} T_y \right| \geq |B_{R - 4 hr}|.
  \end{equation*}
  Finally, observe that at each point of $x \in \cup \{ S_y : y \in \mathcal Y \}$, there is a translation $\tilde o_x$ of $o$ such that $v = \tilde o_x$ in $B_r(x)$.  The theorem now follows from the estimate \new{$|B_{R - 4 hr}| / |B_R| \geq 1 - C R^{-1} r h$}.
\end{proof}

\section{Explicit Solution}

In this section, we describe the solution of
\begin{equation}
  \label{e.pdesquare}
  \begin{cases}
    D^2 \bar u \in \partial \Gamma & \mbox{in } \new{(0,1)^2} \\
    \bar u = 0 & \mbox{on } \R^2 \setminus \new{(0,1)^2}.
  \end{cases}
\end{equation}
As one might expect from \fref{identity}, the solution is piecewise quadratic and satisfies the stronger constraint $D^2 \bar u \in \Gamma^+$.  The algorithm described here is implemented in the code attached to the arXiv upload.

\begin{theorem}[\cite{Levine-Pegden-Smart-1,Levine-Pegden-Smart-2}]
  \label{t.piecewise}
  There are disjoint open sets $\Omega_k \subseteq \Omega = (0,1)^2$ and constants $L > 1$, \new{ $\delta > 0$} such that the following hold.
  \begin{enumerate}
  \item $\sum_k |\Omega_k| = |\Omega|$ and $|\Omega_k| \leq \new{ k^{-\delta}}$.
  \item $D^2 \bar u$ is constant in each $\Omega_k$ with value $P_k \in \Gamma^+$.
  \item The $V_k \in \Z^{2 \times 3}$ corresponding to $P_k$ via \pref{structure} satisfies $|V_k| \leq L^k$.
  \item For $r > 0$, $| \{ x : B_r(x) \subseteq \Omega_n \} | \geq |\Omega_n| - L |\Omega_n|^{1/2} r$.
  \end{enumerate}
\end{theorem}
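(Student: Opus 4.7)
The plan is to exhibit $\bar u$ by an explicit recursive construction, following \cite{Levine-Pegden-Smart-1}, and then verify that it is the unique viscosity solution of \eref{pdesquare}. I would first exploit the $D_4$ symmetry of the square to reduce attention to one fundamental triangle, and then build $\bar u$ inductively over a nested sequence of triangular regions. At each step I would place a quadratic patch of constant Hessian $P \in \Gamma^+$ matching the affine boundary data on two edges of the current triangle, choosing $P$ as the unique extremal Hessian made available by the classification in \pref{structure}.

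For the base case, the outermost patch $\Omega_1$ of the triangle is determined by the outer boundary condition $\bar u = 0$ together with continuity; this fixes $P_1$ (and its $V_1$) as the simplest extremal pair in $\Gamma^+$ compatible with the geometry. Once this patch is placed, the remainder of the triangle splits into finitely many similar smaller triangles with rescaled affine boundary data, so the same step can be applied inside each sub-triangle. The key combinatorial input is the Sportiello ansatz, translated to our cutoff via the $q$-shift from the Toppling cutoff subsection, which guarantees that the recursion closes: the next $P \in \Gamma^+$ to appear at each scale is determined from the previous one by a bounded-complexity integer operation on the matrices $A, V$ from \pref{structure}.

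Given the recursion, the quantitative claims follow by tracking scales. The ratio of successive triangle areas is bounded by a constant $\tau \in (0,1)$ depending only on the geometry of the subdivision, which yields $|\Omega_k| \leq \tau^k$; summing a geometric series together with the fact that the recursion is exhaustive produces $\sum_k |\Omega_k| = |\Omega|$. For $|V_k| \leq L^k$, each recursive step composes $V_{k-1}$ with a bounded-size integer matrix, and by the estimate $1 \leq |V|^2 \leq C_{\ref{p.structure}} \det(V)$ the operator norms grow at most geometrically. Part~(4) is then essentially automatic: each $\Omega_k$ is an open triangle of diameter on the order of $|\Omega_k|^{1/2}$, so the set of points whose $r$-neighborhood leaves $\Omega_k$ is a boundary strip of area at most $L |\Omega_k|^{1/2} r$.

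The main obstacle, where the bulk of the effort in \cite{Levine-Pegden-Smart-1,Levine-Pegden-Smart-2} is spent, is verifying that the piecewise-quadratic function produced by the construction is in fact the viscosity solution of \eref{pdesquare}. One must check that at every triangle interface the Hessians glue correctly: each $P_k \in \Gamma^+$ lies on $\partial \Gamma$, and along interfaces no smooth function with Hessian in the interior of $\Gamma$ can touch the constructed function from above, while no smooth function with Hessian in the interior of the complement of $\Gamma$ can touch it from below. One must also check that the recursion is consistent across all scales so that the assembled function is continuous. Combined with the uniqueness afforded by viscosity comparison (cf.\ \cite{Pegden-Smart}), this identifies the explicit construction with $\bar u$ and completes the proof.
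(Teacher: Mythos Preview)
Your high-level strategy---recursive triangular subdivision with quadratic patches whose Hessians lie in $\Gamma^+$---matches the paper's, and your arguments for (1), (3), and (4) are sound. Two differences are worth noting. First, the Sportiello attribution is misplaced: the paper invokes Sportiello only for the cutoff-$3$ identity, while the recursion that closes here is from \cite{Levine-Pegden-Smart-1,Levine-Pegden-Smart-2} and is made completely explicit via an iterated function system (specific complex matrices $Q,R,S$ acting on triples $z_s,a_s\in\C^3$ indexed by $s\in\{1,2,3\}^{<\omega}$, from which one reads off both the triangle vertices and the gradient maps $L_{z_s,a_s}$). Second, and more substantively, you propose to build $\bar u$ directly and then verify the viscosity touching conditions across the infinitely many interfaces; the paper instead constructs a decreasing sequence of piecewise-quadratic supersolutions $\bar v_n \downarrow \bar u$, each with only finitely many pieces (the outer ones with Hessian already in $\Gamma^+$, the innermost ones with Hessian merely in $\Gamma$ and subdivided at the next stage). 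The supersolution route buys a cleaner verification---each $\bar v_n$ involves only finitely many interface checks, and a decreasing limit of supersolutions remains a supersolution---whereas your direct approach is viable but requires handling the infinite gluing all at once.
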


Since this result is essentially contained in \cite{Levine-Pegden-Smart-1} and \cite{Levine-Pegden-Smart-2}, we omit the proofs, giving only the explicit construction and a reminder of its properties.  We construct a family of super-solutions $\bar v_n \in C(\R^2)$ of \eref{pdesquare} such that $\bar v_n \downarrow \bar u$ uniformly as $n \to \infty$.  Each super-solution $\bar v_n$ is a piecewise quadratic function with finitely many pieces.  The measure of the pieces whose Hessians do not lie in $\Gamma^+$ goes to zero as $n \to \infty$.  The Laplacians of the first eight super-solutions is displayed in \fref{supersolutions}.  The construction is similar to that of a Sierpinski gasket, and the pieces are generated by an iterated function system.

\begin{figure}[h]
  \includegraphics[angle=90,width=.23\textwidth]{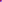}
  \hfill
  \includegraphics[angle=90,width=.23\textwidth]{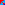}
  \hfill
  \includegraphics[angle=90,width=.23\textwidth]{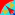}
  \hfill
  \includegraphics[angle=90,width=.23\textwidth]{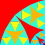} \\
  \bigskip
  \includegraphics[angle=90,width=.23\textwidth]{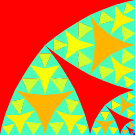}
  \hfill
  \includegraphics[angle=90,width=.23\textwidth]{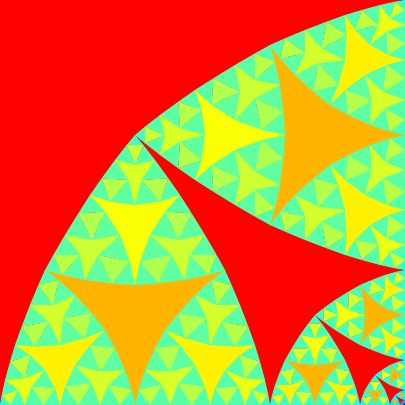}
  \hfill
  \includegraphics[angle=90,width=.23\textwidth]{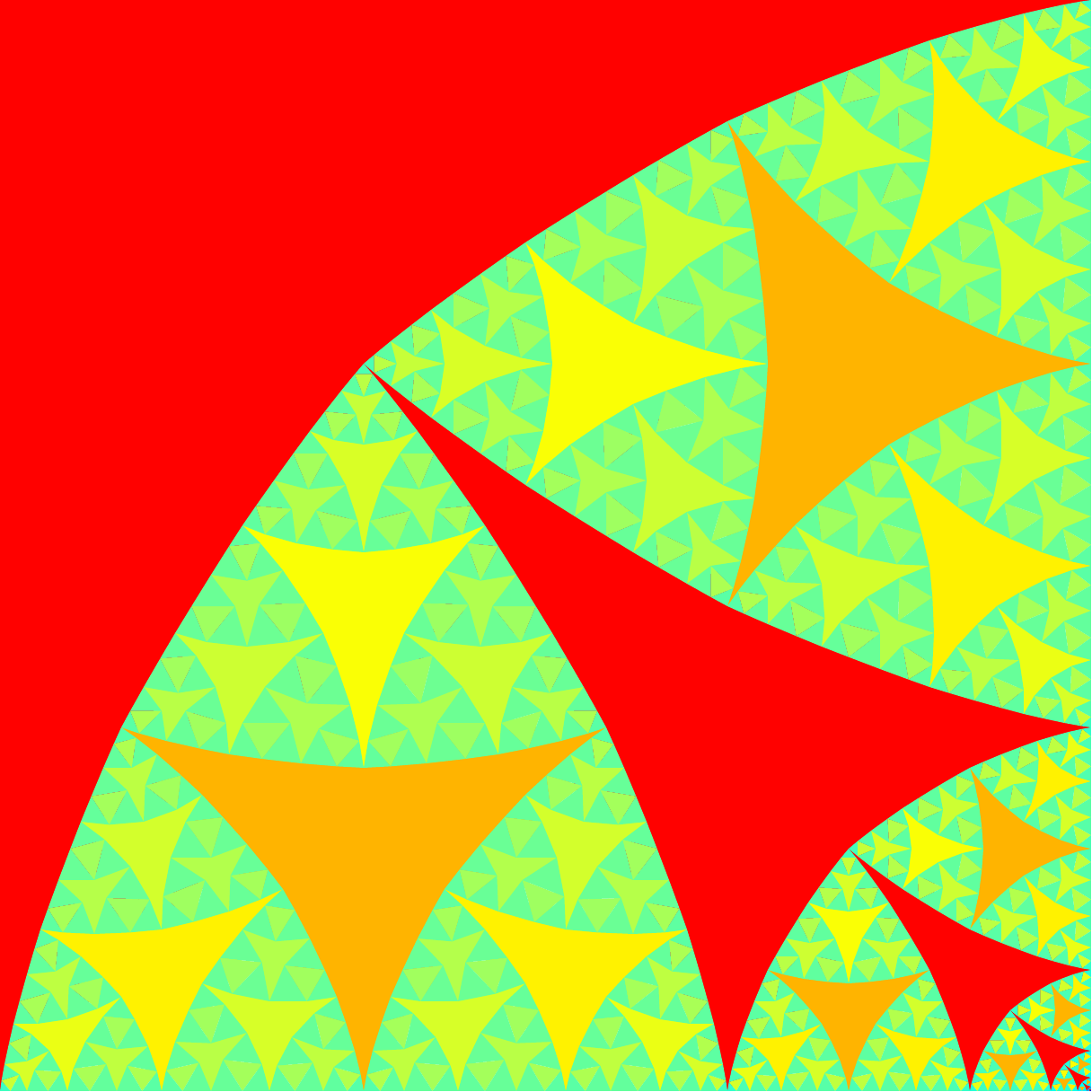}
  \hfill
  \includegraphics[angle=90,width=.23\textwidth]{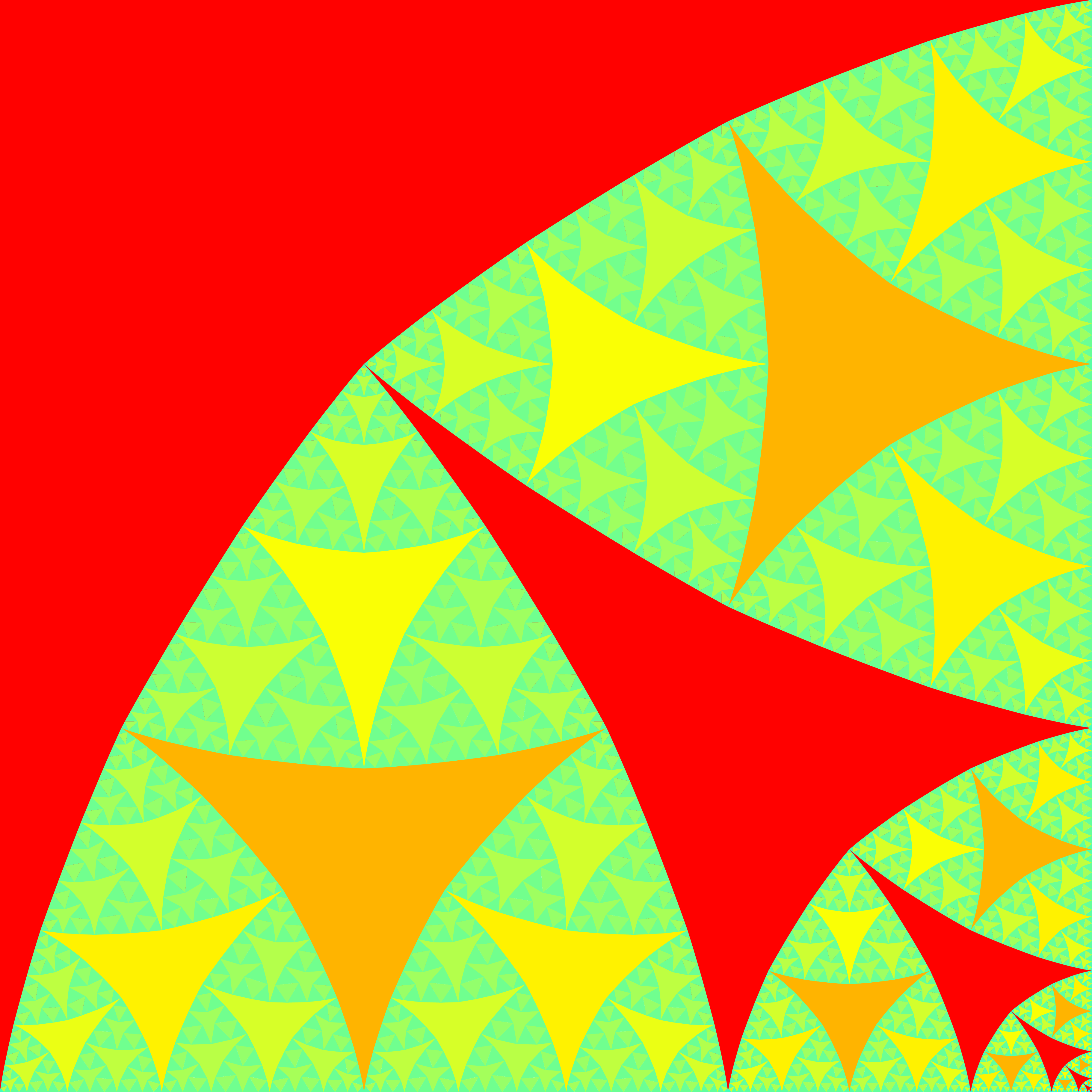}
  \caption{The Laplacian of the supersolution $\bar v_n$ on $(0,1)^2$ for $n = 0, ..., 7$}
  \label{f.supersolutions}
\end{figure}

The solution is derived from the following data.

\begin{definition}
    Let $z_s, a_s, w_s, b_s \in \C^3$ for $s \in \{ 1,2,3 \}^{< \omega}$ satisfy
  \begin{equation*}
    z_{()} = \mat{1 \\ 1+i \\ i},
    \ 
    a_{()} = \mat{0 \\ -1 \\ i},
    \ 
    z_{s k} = Q R^k z_s,
    \
    a_{s k} = \overline Q R^k z_s,
    \
    w_s = S z_s,
    \mbox{ and }
    b_s = \overline S a_s,
  \end{equation*}
  where
  \begin{equation*}
    Q = \frac13 \mat{3 & 0 & 0 \\ 1 + i & 1 - i & 1 \\ 1 - i & 1 & 1 + i},
    \ 
    R = \mat{1 & 0 & 0 \\ 0 & 0 & 1 \\ 0 & 1 & 0},
    \mbox{ and }
    S = \frac13 \mat{1 & 1+i & 1-i \\ 1-i & 1 & 1+i \\ 1+i & 1-i & 1}.
  \end{equation*}
\end{definition}

The above iterated function system generates four families of triangles, which we use to define linear maps by interpolation.

\begin{definition}
  For $z, a \in \C^3$, let $L_{z,a}$ be the linear interpolation of the map $z_k \mapsto a_k$.  That is, $L_{z,a}$ has domain
  \begin{equation*}
    \triangle_z = \{ t_1 z_1 + t_2 z_2 + t_3 z_3 : t_1, t_2, t_3 \geq 0 \mbox{ and } t_1 + t_2 + t_3 = 1 \}
  \end{equation*}
  and satisfies
  \begin{equation*}
    L_{z,a}(t_1 z_1 + t_2 z_2 + t_3 z_3) = t_1 a_1 + t_2 a_2 + t_3 a_3.
  \end{equation*}
\end{definition}

Identifying $\C$ and $\R^2$ in the usual way, $L_{z,a}$ is a map between triangles in $\R^2$.  We glue together the linear maps $L_{z_s,a_s}$ and $L_{w_s,b_s}$ to construct the gradients of our super-solutions.  The complication is that the triangles $\triangle_{z_s}$ and $\triangle_{w_s}$ are not disjoint.  As we see in \fref{supersolutions}, the domains of later maps intersect the earlier ones.  We simply allow the later maps to overwrite the earlier ones.

\begin{definition}
  For integers $n \geq k \geq -1$, let $G_{n,k} : (0,1)^2 \to \R^2$ satisfy
  \begin{equation*}
    G_{n,-1}(x) = \smat{0 & 0 \\ 0 & 1}x,
  \end{equation*}
  \begin{equation*}
    G_{n,n}(x) = \begin{cases}
      L_{z_s,a_s}(x) & \mbox{if } x \in \triangle_{z_s} \mbox{ for } s \in \{ 1, 2, 3 \}^n \\
      G_{n,n-1}(x) & \mbox{otherwise},
    \end{cases}
  \end{equation*}
  and, for $n > k > -1$,
  \begin{equation*}
    G_{n,k}(x) = \begin{cases}
      L_{w_s,b_s}(x) & \mbox{if } x \in \triangle_{w_s} \mbox{ for } s \in \{ 1, 2, 3 \}^k \\
      G_{n,k-1}(x) & \mbox{otherwise}.
    \end{cases}
  \end{equation*}
  For integers $n \geq 0$, let $G_n = G_{n,n}$
\end{definition}

It follows by induction that $G_n$ is continuous and the gradient of a supersolution:

\begin{proposition}[\cite{Levine-Pegden-Smart-1}]
  For $n \geq 0$, there is a $\bar v_n \in C(\R^2) \cap C^1((0,1)^2)$ such that
  \begin{equation*}
    \bar v_n(x_1,x_2) = \bar v_n(|x_1|,|x_2|),
  \end{equation*}
  \begin{equation*}
    \bar v_n = 0 \quad \mbox{on } \R^2 \setminus (\new{ 0},1)^2,
  \end{equation*}
  and
  \begin{equation*}
    D \bar v_n(x) = G_n(x) + \smat{1 & 0 \\ 0 & 0} x \quad \mbox{for } x \in (0,1)^2
  \end{equation*}
  Moreover, $\sup_n \sup_{\new{(0,1)^2}} |D\bar v_n| < \infty$.
\end{proposition}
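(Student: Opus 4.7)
My plan is to define $\bar v_n$ on $(0,1)^2$ by path-integrating the vector field $G_n(x) + \smat{1 & 0 \\ 0 & 0} x$, and then extend to $\R^2$ via reflection symmetry and zero boundary values. The argument proceeds by induction on $n$ (and on word length $|s|$ within each $n$), and reduces to finite-dimensional verifications for the iterated function system generated by $Q, R, S$.

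The first and most substantial step is to show that $G_n$ is continuous on $(0,1)^2$. Although the recursive definition uses an ``overwrite'' convention, for continuity one needs each interpolant $L_{z_s,a_s}$ (resp.\ $L_{w_s,b_s}$) to agree with the piece it overwrites along their common triangle edges. I would prove this by induction on $|s|$: the base case is a direct computation from the explicit $z_{()}, a_{()}$ and $G_{n,-1}(x) = \smat{0 & 0 \\ 0 & 1} x$; the inductive step uses that the covariance rules $z_{sk} = Q R^k z_s$, $a_{sk} = \overline Q R^k a_s$, $w_s = S z_s$, $b_s = \overline S a_s$ transport the edge-matching identities up one level at a time. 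This is where the specific algebraic choice of $Q, R, S$ is essential.

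Once continuity of $G_n$ is established, the primitive $\bar v_n$ on $(0,1)^2$ is defined as a line integral, which is path-independent provided each affine piece of $G_n + \smat{1 & 0 \\ 0 & 0} \cdot$ is curl-free. Concretely, if $L_{z_s,a_s}$ has linear part $M_s$ (as a real $2 \times 2$ matrix after identifying $\C \cong \R^2$), one needs $M_s$ symmetric; this is again a finite linear-algebra check at the base level that propagates to all $s$ by covariance. Continuity of $\bar v_n$ across triangle boundaries then follows from continuity of $G_n$, and $\bar v_n \in C^1((0,1)^2)$ is immediate from the piecewise affine structure. To extend to $\R^2$, I would fix the additive constant so that $\bar v_n \equiv 0$ on $\partial (-1,1)^2$; consistency of this boundary condition is a closed-loop integral identity that I expect to follow from the reflection symmetries built into $\overline Q$ and $\overline S$, which also yield the equivariance $\bar v_n(x_1, x_2) = \bar v_n(|x_1|, |x_2|)$. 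Finally, the uniform bound $\sup_n \sup_{(-1,1)^2} |D \bar v_n| < \infty$ holds because the vertex sets $\{z_s\}, \{a_s\}, \{w_s\}, \{b_s\}$ lie in bounded subsets of $\C^3$, so $|G_n|$ is uniformly bounded.

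The main obstacle is the edge-matching verification in the first step: one must identify which earlier piece each new triangle overwrites and check that the two affine maps agree on the shared edge. This is a bookkeeping task in the iterated function system, but once the base case and the covariance identities for $Q, R, S$ are in hand, the induction goes through and the remaining steps reduce to standard linear-algebra and calculus computations.
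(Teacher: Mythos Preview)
The paper does not actually prove this proposition: it is quoted from \cite{Levine-Pegden-Smart-1}, and the authors explicitly say they ``omit the proofs, giving only the explicit construction and a reminder of its properties.'' The only hint offered is the sentence ``It follows by induction that $G_n$ is continuous and the gradient of a supersolution.'' Your outline---induction on word length to verify edge-matching and hence continuity of $G_n$, symmetry of the linear parts so the field is curl-free, path-integration on the simply connected square, and then the boundary/reflection check---is exactly the argument that sentence is summarizing, so you are on the same track as the cited source.

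One point to sharpen: fixing the additive constant is not by itself enough to get $\bar v_n \equiv 0$ on $\partial(-1,1)^2$; you need the tangential component of $G_n + \smat{1&0\\0&0}x$ to vanish on the outer edges $\{x_1=1\}$ and $\{x_2=1\}$ of $(0,1)^2$. This is a direct computation from $L_{z_{()},a_{()}}$ (those edges lie in $\triangle_{z_{()}}$ and are never overwritten), not really a consequence of the $\overline Q,\overline S$ symmetries as you suggest. Similarly, for the uniform gradient bound you should note that $Q$ and $S$ are affine contractions on the relevant simplex (or argue directly that the orbits stay in a fixed compact set), rather than leave it as an assertion; but these are refinements, not gaps.
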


The above proposition implies that the gradients $D L_{z_s, a_s}$ and $D L_{w_s, b_s}$ are symmetric matrices.  In fact, one can prove that
\begin{equation*}
  D L_{z_s,a_s} + \smat{ 1 & 0 \\ 0 & 0 } \in \Gamma
  \quad \mbox{and} \quad
  D L_{w_s,b_s} + \smat{ 1 & 0 \\ 0 & 0 } \in \Gamma^+.
\end{equation*}
Passing to the limit $n \to \infty$, one obtains \tref{piecewise}.

\begin{remark}
  Observe that the intersection points of the pieces of the explicit solution all have triadic rational coordinates.  We expect this is connected to \qref{perfect}.
\end{remark}

\section{Quantitative Convergence}

In order to use \tref{stability} to prove appearance of patterns, we need a rate of convergence.  Throughout this section, fix a bounded convex set $\Omega \subseteq \R^2$ and functions $u_n : \Z^2 \to \Z$ and $\bar u \in C(\R^2)$ that solve \eref{fde} and \eref{pde}, respectively.  We know that rescalings $\bar u_n(x) = n^{-2} u_n(n x) \to \bar u(x)$ uniformly in $x \in \R^2$ as $n \to \infty$.  We quantify this convergence using the additional regularity afforded by \tref{piecewise}.  The additional regularity arrives in the form of local approximation by recurrent functions.

\begin{definition}
  We say that $\bar u$ is $\ep$-approximated if $\ep \in (0,1/2)$ and there is a constant $K \geq 1$ such that the following holds for all $n \geq 1$:  For a $1 - K n^{-\ep}$ fraction of points $x \in \Z^2 \cap n \Omega$, there is a $u : \Z^2 \to \Z$ that is recurrent in $B_{n^{1-\ep}}(x) \subseteq n \Omega$ and satisfies $\max_{y \in B_{n^{1-\ep}}(x)} |u(y) - n^2 \bar u(n^{-1} y)| \leq K n^{2-3\ep}$.
\end{definition}

Being $\ep$-approximated implies quantitative convergence of $\bar u_n$ to $\bar u$.

\begin{theorem}
  \label{t.convergence}
  If $\bar u$ is $\ep$-approximated, then there is an $L > 0$ such that
  \begin{equation*}
    \sup_{x \in \Z^2} |u_n(x) - n^2 \bar u(n^{-1} x)| \leq L n^{2 - \ep/8}
  \end{equation*}
  holds for all $n \geq 1$.
\end{theorem}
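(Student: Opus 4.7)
My plan is to bound $\phi(x) := u_n(x) - n^2 \bar u(n^{-1} x)$ above and below separately, combining a local comparison with the recurrent approximations supplied by the $\ep$-approximation hypothesis with a discrete Alexandroff--Bakelman--Pucci (ABP) estimate of the type mentioned in the introduction. The function $\phi$ vanishes outside $n\Omega$, and $\Delta u_n \leq 2$ together with $1 \leq \trace D^2 \bar u \leq 2$ give $|\Delta \phi| \leq C$ pointwise in $n\Omega$ up to a correction of order $n^{-2}\|D^4 \bar u\|$ on the pieces where $\bar u$ is smooth.

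The first step is a local oscillation inequality at good points. If $x \in n\Omega$ is good, the $\ep$-approximation gives $u^{(x)}$ that is recurrent in $B := B_{n^{1-\ep}}(x)$ and satisfies $|u^{(x)} - n^2 \bar u(n^{-1}\cdot)| \leq K n^{2 - 3\ep}$ on $B$. Since $u_n$ is also recurrent in $B$, a comparison principle for pairs of recurrent functions---obtainable from the least-action definition of recurrence by comparing $u_n$ and $u^{(x)}$ via $\min(u_n, u^{(x)} + c)$ for suitable $c$---yields $\sup_{B} |u_n - u^{(x)}| \leq \sup_{\partial B} |u_n - u^{(x)}|$. Combined with the approximation bound, this gives
\[
  |\phi(x)| \leq \sup_{\partial B} |\phi| + 2 K n^{2 - 3\ep},
\]
the key local oscillation inequality.

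The second step propagates this local control to all of $n\Omega$ via a discrete ABP-type estimate. Writing $M := \sup \phi$ and assuming it is attained at some $x^* \in n\Omega$, the concave envelope $\Gamma$ of $\phi^+$ has gradient image covering a disk of radius $cM/n$, so in two dimensions the contact set $\Sigma := \{\phi = \Gamma\}$ has area at least $c(M/n)^2$. The bad set has measure at most $K n^{2 - \ep}$ by hypothesis, so once $M$ exceeds a threshold the set $\Sigma$ meets the good set in positive density. At a good $x \in \Sigma$, the oscillation inequality combined with the concavity of $\Gamma$ forces $\phi(x) = \Gamma(x)$ to lie below the nearby boundary values of $\Gamma$ by a quadratically small amount, which iterates into a contraction capping $M$. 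Balancing the three competing length scales in the problem---$n^{1 - \ep}$ for the local approximation, $M/n$ for the paraboloid $\Gamma$, and $n^{1-\ep/2}$ for the worst-case diameter of a bad cluster---produces $M \leq L n^{2-\ep/8}$. The symmetric argument applied to $-\phi$ gives the matching lower bound.

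The main obstacle is the discrete ABP/growth-lemma step, in particular quantifying how the oscillation inequality interacts with the concave envelope's contact set when the bad set is placed adversarially. The comparison principle for recurrent functions, while natural, also requires a short argument from the definition, since it is strictly stronger than the comparison obtainable from $\Delta u_n, \Delta u^{(x)} \leq 2$ alone. The exponent $\ep/8$ appears to arise from the specific optimization over these length scales, and I expect a more careful barrier construction (or a direct construction of a global supersolution out of the local recurrent approximations $u^{(x)}$) may improve it, though probably not below $\ep/2$.
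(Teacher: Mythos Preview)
Your overall strategy --- combine an ABP-type covering argument with the local recurrent approximations to locate a good touching point, then contradict recurrence --- is in the same spirit as the paper's, but there is a genuine gap in your implementation. You apply the concave-envelope/ABP machinery directly to $\phi(x) = u_n(x) - n^2\bar u(n^{-1}x)$, and to conclude that the contact set has area at least $c(M/n)^2$ you need an upper bound on the subdifferential of the concave envelope at each contact point, hence on the discrete second differences of $n^2\bar u(n^{-1}\cdot)$. The hypothesis gives only that $\bar u$ is a viscosity solution with $1\le \trace D^2\bar u\le 2$; no pointwise bound on $\Delta\phi$ follows, and your parenthetical ``up to a correction of order $n^{-2}\|D^4\bar u\|$ on the pieces where $\bar u$ is smooth'' does not cover the general $\ep$-approximated case (nor even the interfaces in the piecewise-quadratic case). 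The subsequent ``iterates into a contraction capping $M$'' and the heuristic balancing of three length scales are too vague to yield the exponent $\ep/8$.

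The paper circumvents the regularity issue by the standard viscosity-theory device of doubling the variables: rather than $\phi$, it maximizes
\[
\Phi(x,y)=\bar u_n(x)-\bar u(y)-\delta\ep|x|^2-\delta^{-1}\ep^{-1}|x-y|^2-p\cdot x-q\cdot y
\]
over $n^{-1}\Z^2\times\R^2$, keeping the discrete function on the lattice and the continuum viscosity solution on $\R^2$. The ABP-type input (from \cite{Armstrong-Smart} together with the discrete ABP of \cite{Lawler,Kuo-Trudinger}) then says that as the tilts $(p,q)$ range over $B_{\delta\ep}^2$, the maximizers $(x^*,y^*)$ cover a $\delta^{10}\ep^8$ fraction of $(n^{-1}\Z^2\cap\Omega)\times\Omega$; this $\ep^8$ is exactly what forces the exponent $\ep/8$. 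For $L$ large this fraction beats the bad fraction $Kn^{-\ep}$, so one may choose $(p,q)$ with $y^*$ good. Then $z\mapsto u_n(nx^*+z)-w(z)-r\cdot z$, with $w$ the recurrent approximation at $ny^*$ and $r$ the integer rounding of the effective tilt, has a strict interior maximum in $B_{n^{1-\ep}}$, contradicting recurrence of $u_n$ in one stroke. No two-sided oscillation inequality between recurrent functions and no iteration is needed; a single one-sided strict-maximum contradiction suffices.
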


A key ingredient of our proof of this theorem is a standard ``doubling the variables'' result from viscosity solution theory.  This is analogous to ideas used in the convergence result of \cite{Caffarelli-Souganidis} for monotone difference approximations of fully nonlinear uniformly elliptic equations.  In place of $\delta$-viscosity solutions, we use \cite[Lemma 6.1]{Armstrong-Smart} as a natural quantification of the Theorem on Sums \cite{Crandall} in the uniformly elliptic setting.  In the following lemma, we abuse notation and use the Laplacian both for functions on the rescaled lattice $n^{-1} \Z^2$ and the continuum $\R^2$.

\begin{lemma}
  \label{l.doubling}
  Suppose that
  \begin{enumerate}
  \item $\Omega \subseteq \R^2$ is open, bounded, and convex,
  \item $u : n^{-1} \Z^2 \to \R$ satisfies $|\Delta u| \leq 1$ in $n^{-1} \Z^2 \cap \Omega$ and $u = 0$ in $n^{-1} \Z^2 \setminus \Omega$,
  \item $v \in C(\R^2)$ satisfies $|\Delta v| \leq 1$ in $\Omega$ and $v = 0$ in $\R^2 \setminus \Omega$,
  \item $\max_{n^{-1} \Z^2} (u - v) = \ep > 0$.
  \end{enumerate}
  There is a $\delta > 0$ depending only on $\Omega$ such that, for all $p, q \in B_{\delta \ep}$, the function
  \begin{equation*}
    \Phi(x,y) = u(x) - v(y) - \delta \ep |x^2| - \delta^{-1} \ep^{-1} |x - y|^2 - p \cdot x - q \cdot y
  \end{equation*}
  attains its maximum over $n^{-1} \Z^2 \times \R^2$ at a point $(x^*, y^*)$ such that $B_{\delta \ep}(x^*) \subseteq \Omega$ and $B_{\delta \ep}(y^*) \subseteq \Omega$.  Moreover, the set of possible maxima $(x^*,y^*)$ as the slopes $(p,q)$ vary covers a $\delta^{10} \ep^8$ fraction of $(n^{-1} \Z^2 \cap \Omega) \times \Omega$.
\end{lemma}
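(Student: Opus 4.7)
The lemma splits naturally into two assertions—interior location of the maximum, and the coverage fraction—which I plan to handle in order.

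For the interior maximum, I start from a point $x_0\in n^{-1}\Z^2\cap\Omega$ realizing $(u-v)(x_0)=\ep$. Standard barrier estimates using $|\Delta u|,|\Delta v|\le 1$, convexity of $\Omega$, and the zero boundary data give $\sup|u|+\sup|v|\le C(\Omega)$, whence
\[
\Phi(x_0,x_0)=\ep-\delta\ep|x_0|^2-(p+q)\cdot x_0\ge \tfrac12\ep
\]
once $\delta$ is small enough in terms of $\Omega$. The coercivity of the penalty in $\Phi$ guarantees that the supremum is attained, and any maximizer $(x^*,y^*)$ must satisfy $\Phi(x^*,y^*)\ge\tfrac12\ep$. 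Combined with the global sup-norm bounds on $u,v,p,q$, this yields
\[
|x^*|\le C(\delta\ep)^{-1/2},\qquad |x^*-y^*|\le C(\delta\ep)^{1/2},\qquad u(x^*)-v(y^*)\ge\tfrac14\ep.
\]
The last inequality prevents $x^*$ or $y^*$ from lying within $\delta\ep$ of $\partial\Omega$: the boundary barrier $|u(x)|+|v(x)|\le C(\Omega)\dist(x,\partial\Omega)$ (from $|\Delta u|,|\Delta v|\le 1$ and the exterior ball / convexity condition) would otherwise force $u(x^*)-v(y^*)\le C\delta\ep$, contradicting $\ep/4$ for $\delta$ small. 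Thus $B_{\delta\ep}(x^*),B_{\delta\ep}(y^*)\subseteq\Omega$.

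For the coverage estimate, I consider
\[
F(x,y)=u(x)-v(y)-\delta\ep|x|^2-\delta^{-1}\ep^{-1}|x-y|^2
\]
on $n^{-1}\Z^2\times\R^2$ and let $\Sigma$ be the set of maximizers of $F(x,y)-p\cdot x-q\cdot y$ as $(p,q)$ ranges over $B_{\delta\ep}\times B_{\delta\ep}$. Each such $(p,q)$ lies in the discrete–continuous superdifferential of $F$ at the corresponding $(x^*,y^*)$, and hence in the superdifferential of the concave envelope $F^{**}$. The plan is to invoke the quantitative Theorem On Sums \cite[Lemma 6.1]{Armstrong-Smart}: the uniform ellipticity $|\Delta u|,|\Delta v|\le 1$ passes to an upper bound on the symmetric matrices $(X,-Y)$ representing the semi-jets of $F$ at contact points, which combined with the explicit Hessian of the penalty pins down $|\det D^2 F^{**}|$ on $\Sigma$. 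An area-formula comparison
\[
|B_{\delta\ep}\times B_{\delta\ep}|\le C\sup_\Sigma|\det D^2 F^{**}|\cdot|\Sigma|
\]
then gives a lower bound on $|\Sigma|$; tracking the $\delta\ep$ and $\delta^{-1}\ep^{-1}$ factors through the Jacobian computation produces the claimed $\delta^{10}\ep^8$ fraction of $(n^{-1}\Z^2\cap\Omega)\times\Omega$ after normalizing by $|\Omega|^2$.

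I expect the coverage bound to be the main obstacle. The interior-location step is a clean variant of the standard doubling of variables from viscosity solution theory, adapted only to the discrete–continuous asymmetry. The subtle point in the coverage estimate is that the cross-penalty $-\delta^{-1}\ep^{-1}|x-y|^2$ has a rank-$2$ Hessian in $\R^4$, so a naive ABP bound is degenerate; the missing directions of strict concavity must be supplied both by the regularizing $|x|^2$ term and by the quantitative semi-jet bounds on $u$ and $v$. The Armstrong--Smart lemma packages exactly this argument, and the explicit exponents $10$ and $8$ are a bookkeeping output of the Jacobian computation rather than artifacts of any geometric structure.
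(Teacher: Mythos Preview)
Your overall strategy is the paper's: Step~1 is the standard doubling-of-variables localization, and Step~2 invokes \cite[Lemma~6.1]{Armstrong-Smart}. The paper phrases the output of Step~2 as the touching map $(p,q)\mapsto(x^*,y^*)$ having a $\delta^{-5}\ep^{-4}$-Lipschitz inverse, followed by a covering argument; this is equivalent to your area-formula picture. One point you gloss over but the paper flags explicitly: since $u$ lives on $n^{-1}\Z^2$, the ABP input must be the discrete one \cite{Lawler,Kuo-Trudinger}, not the continuum version.

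There is, however, a genuine quantitative gap in your Step~1. From sup-norm bounds alone you correctly get $|x^*-y^*|\le C(\delta\ep)^{1/2}$, and you then assert that $\dist(x^*,\partial\Omega)<\delta\ep$ would force $u(x^*)-v(y^*)\le C\delta\ep$. This does not follow: your bound on $|x^*-y^*|$ only yields $\dist(y^*,\partial\Omega)\le C(\delta\ep)^{1/2}$, hence $|v(y^*)|\le C(\delta\ep)^{1/2}$ from the barrier, and $(\delta\ep)^{1/2}\gg\ep$ when $\ep$ is small and $\delta$ is fixed depending only on $\Omega$. The argument as written therefore does not close. The repair, which is precisely what the paper does, is to use the Lipschitz bound on $v$ (not merely the sup bound) to compare $\Phi(x^*,y^*)$ with $\Phi(x^*,x^*)$:
\[
0\le \Phi(x^*,y^*)-\Phi(x^*,x^*)\le C|x^*-y^*|-\delta^{-1}\ep^{-1}|x^*-y^*|^2,
\]
giving the sharper estimate $|x^*-y^*|\le C\delta\ep$. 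With this in hand your barrier argument goes through.
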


\begin{proof}
  Step 1.
  Standard estimates for functions with bounded Laplacian (both discrete and continuous) imply that $\bar u_n$ and $\bar u$ are Lipschitz with a constant depending only on the convex set $\Omega$.  Estimate
  \begin{equation*}
    \bar u_n(x) - \bar u(x) \geq \Phi(x,x) \geq \bar u_n(x) - \bar u(x) - C \delta \ep
  \end{equation*}
  and, using the Lipschitz estimates,
  \begin{equation*}
    \Phi(x,y) \leq \Phi(x,x) + C |x - y| - \delta^{-1} \ep^{-1} |x-y|^2.
  \end{equation*}
  Thus, if $\max_{x,y} \Phi(x,y) = \Phi(x^*,y^*)$, then
  \begin{equation*}
    \ep - C \delta \ep \leq \Phi(x^*,y^*) \leq \ep + C \delta \ep - C \delta^{-1} \ep^{-1} |x^* - y^*|^2.
  \end{equation*}
  In particular, if $\delta > 0$ is sufficiently small, then
  \begin{equation*}
    |x^* - y^*| \leq C \delta \ep.
  \end{equation*}
  Using the boundary conditions in combination with the Lipschitz estimates, we see that, provided $\delta > 0$ is sufficiently small, $B_{\delta \ep}(x^*) \subseteq \Omega$ and $B_{\delta \ep}(y^*) \subseteq \Omega.$

  Step 2.  The final measure-theoretic statement is an immediate consequence of the fact that the touching map $(p,q) \mapsto (x^*,y^*)$ has a $\delta^{-5} \ep^{-4}$-Lipschitz inverse.  This is a consequence of the proof of \cite[Lemma 6.1]{Armstrong-Smart}.  Here, one must substitute the discrete Alexandroff-Bakelman-Pucci inequality \cite{Lawler,Kuo-Trudinger} since we have the discrete Laplacian.  The statement we obtain is that, if $\delta > 0$ is sufficiently small, $(p_i,q_i) \mapsto (x_i,y_i)$, and $|(x_1,y_1) - (x_2,y_2)| \leq \delta^2 \ep$, then $|(p_1,q_1) - (p_2,q_2)| \leq \delta \ep$.  The result now follows by a covering argument.
\end{proof}

\begin{proof}[Proof of \tref{convergence}]
  Suppose $\bar u$ is $\ep$-approximated and $K \geq 1$ is the corresponding constant.  For $L > 1$ to be determined, suppose for contradiction that
  \begin{equation*}
    \max_{n^{-1} \Z^2} (\bar u_n - \bar u) \geq L n^{-\ep/8}.
  \end{equation*}
  (The case of the other inequality is symmetric.)  Apply \lref{doubling} with $u = \bar u_n$, $v = \bar u$, and $\ep = L n^{-\ep/8}$.  As the slopes $(p,q)$ vary, the maximum $(x^*,y^*)$ of $\Phi$ satisfies $B_{\delta L n^{-\ep/8}}(y^*) \subseteq \Omega$ and the set of possible $y^*$ covers a $L^8 \delta^{10} n^{-\ep}$ fraction of $n^{-1} \Z^2 \cap \Omega$.  Thus, if $L > 1$ is large enough, we may choose $(p,q)$ such that there is a function $w : \Z^2 \to \Z$ that is recurrent in $B_{n^{1-\ep}}$ and satisfies $\max_{y \in B_{n^{1-\ep}}} |w(y) - n^2 \bar u(n y^* + y)| \leq K n^{2-3\ep}$.  

  Consider
  \begin{multline*}
    z \mapsto \Phi(x^* + z, y^* + z) - \Phi(x^*,y^*) = \\ \bar u_n(x^* + z) - \bar u_n(x^*) - \bar u(y^* + z) - \bar u(y^*) - (2 \delta \ep x^* + p + q) \cdot z - \delta \ep |z|^2,
  \end{multline*}
  which attains its maximum at $0$.  Let $r \in \Z^2$ denote the integer rounding of $2 \delta \ep x^* + p + q$.  Observe that
  \begin{equation*}
    (2 \delta \ep x^* + p + q) \cdot x + \delta \ep |z|^2 \geq (K+1) n^{2 - 3 \ep} \quad \mbox{for } |z| \geq n^{1 - \ep},
  \end{equation*}
  provided that $L > 1$ is large enough.  In particular,
  \begin{equation*}
    z \mapsto \bar u_n(z^* + z) - w(z) - r \cdot z
  \end{equation*}
  attains a strict local maximum in $B_{n^{1-\ep}}$.  This contradicts the maximum principle for recurrent functions.
\end{proof}

\section{Convergence of Patterns}

We prove \tref{main} by combining \tref{stability}, \tref{convergence}, and \tref{piecewise}.

\begin{proof}
  First observe that \tref{piecewise} implies that $\bar u$ is $\alpha$-approximated for some $\alpha > 0$.  Making $\alpha > 0$ smaller, \tref{convergence} implies
  \begin{equation*}
    \sup |\bar u_n - \bar u| \leq C n^{2 - \alpha}
  \end{equation*}
  from \tref{convergence}.  Let us now consider what happens inside an individual piece $\Omega_k$.  For $1 < r < R < n$, \tref{piecewise} implies that a
  \begin{equation*}
    1 - C \tau^{-k/2} n^{-1} R
  \end{equation*}
  fraction of points $x$ in $n \Omega_k$ satisfy $B_R(x) \subseteq n \Omega_k$.  There is an odometer $o_k$ for $P_k$ such that
  \begin{equation*}
    h^2 = \max_{B_R(x)} |u_n - o_k| \leq C n^{2 - \alpha} + C R.
  \end{equation*}
  \old{
  Assuming $r \geq C L^k \geq C |V_k|$, \tref{stability} implies that $\Delta o_k$ $r$-matches $\Delta u_n$ at a
  \begin{equation*}
    1 - C R^{-2} r h
  \end{equation*}
  fraction of points in $B_R(x)$.  Assuming that $r \leq n^{\alpha}$ and setting
  \begin{equation*}
    R = n^{1 - \alpha/3} \tau^{k/6} r^{1/3},
  \end{equation*}
  these together imply that $u_n$ $r$-matches $\Delta o_k$ at a
  \begin{equation*}
    1 - C \tau^{-k/3} n^{-\alpha/3} r^{1/3}
  \end{equation*}
  fraction of points in $n \Omega_k$.  Replacing $C \tau^{-k/3}$ by a larger $L^k$, we can remove the restrictions on $r$, as the estimate becomes trivial at the edges.
}
  
  \bigskip
  \new{
    Assuming $r \geq C L^k \geq C |V_k|$, \tref{stability} implies that $\Delta o_k$ $r$-matches $\Delta u_n$ at a
    \begin{equation*}
    (1 - C R^{-1} r h)
    \end{equation*}
    fraction of points in $B_R(x)$.  In particular, $u_n$ $r$-matches $\Delta o_k$ at at least a 
    \begin{equation*}
      (1 - C \tau^{-k/2} n^{-1} R)^2(1 - C R^{-1} r h)\geq 1-C(\tau^{-k/2}n^{-1} R+R^{-1} r h)
    \end{equation*}
    fraction of points. 

   In particular, assuming $r\leq n^{\alpha/2}$ and setting
  \begin{equation*}
    R = n^{1 - \alpha/4} \tau^{k/4} r^{1/2},
  \end{equation*}
  these together imply that $u_n$ $r$-matches $\Delta o_k$ at a
  \begin{equation*}
    1 - C \tau^{-k/4} n^{-\alpha/4} r^{1/2}
  \end{equation*}
  fraction of points in $n \Omega_k$.  Replacing $C \tau^{-k/4}$ by a larger $L^k$, since in the regime $r>n^{\alpha}$ the claim is vacuous.}
\end{proof}

\begin{bibdiv}
  \begin{biblist}

    \bib{Caracciolo-Paoletti-Sportiello}{article}{
        author={Sergio Caracciolo},
        author={Guglielmo Paoletti},
        author={Andrea Sportiello},
        title={Conservation laws for strings in the Abelian Sandpile Model},
        journal={Europhysics Letters},
        volume={90},
        number={6},
        year={2010},
        pages={60003},
	note={\arxiv{1002.3974}}       
    }
\bib{dhar2009pattern}{article}{
  title={Pattern formation in growing sandpiles},
  author={Dhar, Deepak}, 
  author={Sadhu, Tridib},
  author={Chandra, Samarth},
  journal={Europhysics Letters},
  volume={85},
  number={4},
  pages={48002},
  year={2009},
  publisher={IOP Publishing}
  note={\arxiv{0808.1732}}
}

\bib{sadhu2010pattern}{article}{
  title={Pattern formation in growing sandpiles with multiple sources or sinks},
  author={Sadhu, Tridib},
  author={Dhar, Deepak},
  journal={Journal of Statistical Physics},
  volume={138},
  number={4-5},
  pages={815--837},
  year={2010},
  publisher={Springer},
  note={\arxiv{0909.3192}}
}

    \bib{Armstrong-Smart}{article}{
      author={Armstrong, Scott N.},
      author={Smart, Charles K.},
      title={Quantitative stochastic homogenization of elliptic equations in nondivergence form},
      journal={Arch. Ration. Mech. Anal.},
      volume={214},
      date={2014},
      number={3},
      pages={867--911},
      issn={0003-9527},
      review={\MR{3269637}},
      doi={10.1007/s00205-014-0765-6},
    }

    \bib{Caffarelli-Souganidis}{article}{
      author={Caffarelli, Luis A.},
      author={Souganidis, Panagiotis E.},
      title={A rate of convergence for monotone finite difference approximations to fully nonlinear, uniformly elliptic PDEs},
      journal={Comm. Pure Appl. Math.},
      volume={61},
      date={2008},
      number={1},
      pages={1--17},
      issn={0010-3640},
      review={\MR{2361302}},
      doi={10.1002/cpa.20208},
    }

    \bib{Crandall}{article}{
      author={Crandall, Michael G.},
      title={Viscosity solutions: a primer},
      conference={
        title={Viscosity solutions and applications},
        address={Montecatini Terme},
        date={1995},
      },
      book={
        series={Lecture Notes in Math.},
        volume={1660},
        publisher={Springer, Berlin},
      },
      date={1997},
      pages={1--43},
      review={\MR{1462699}},
      doi={10.1007/BFb0094294},
    }

    \bib{Dhar-Sadhu-Chandra}{article}{
      title={Pattern formation in growing sandpiles},
      author={Dhar, Deepak}, 
      author={Sadhu, Tridib},
      author={Chandra, Samarth},
      journal={Europhysics Letters},
      volume={85},
      number={4},
      pages={48002},
      year={2009},
      publisher={IOP Publishing},
      note={\arxiv{0808.1732}}
    }
    
    \bib{Kalinin-Shkolnikov}{article}{
      author={Kalinin, Nikita},
      author={Shkolnikov, Mikhail},
      title={Tropical curves in sandpiles},
      note={Preprint (2015) \arxiv{1509.02303}}
    }

    \bib{Kuo-Trudinger}{article}{
      author={Kuo, Hung-Ju},
      author={Trudinger, Neil S.},
      title={A note on the discrete Aleksandrov-Bakelman maximum principle},
      booktitle={Proceedings of 1999 International Conference on Nonlinear
        Analysis (Taipei)},
      journal={Taiwanese J. Math.},
      volume={4},
      date={2000},
      number={1},
      pages={55--64},
      issn={1027-5487},
      review={\MR{1757983}},
    }

    \bib{Lawler}{article}{
      author={Lawler, Gregory F.},
      title={Weak convergence of a random walk in a random environment},
      journal={Comm. Math. Phys.},
      volume={87},
      date={1982/83},
      number={1},
      pages={81--87},
      issn={0010-3616},
      review={\MR{680649}},
    }
    
    \bib{Levine-Pegden-Smart-1}{article}{
      author={Levine, Lionel},
      author={Pegden, Wesley},
      author={Smart, Charles K},
      title={Apollonian structure in the Abelian sandpile},
      note={Preprint (2012) \arxiv{1208.4839}}
    }
    
    \bib{Levine-Pegden-Smart-2}{article}{
      author={Levine, Lionel},
      author={Pegden, Wesley},
      author={Smart, Charles K},
      title={The Apollonian structure of integer superharmonic matrices},
      note={Preprint (2013) \arxiv{1309.3267}}
    }
    
    \bib{Levine-Propp}{article}{
      author={Levine, Lionel},
      author={Propp, James},
      title={What is $\dots$ a sandpile?},
      journal={Notices Amer. Math. Soc.},
      volume={57},
      date={2010},
      number={8},
      pages={976--979},
      issn={0002-9920},
      review={\MR{2667495}},
    }

    \bib{Ostojic}{article}{
      title={Patterns formed by addition of grains to only one site of an abelian sandpile},
      author={Ostojic, Srdjan},
      journal={Physica A: Statistical Mechanics and its Applications},
      volume={318},
      number={1},
      pages={187--199},
      year={2003},
      publisher={Elsevier}
    }
    
    \bib{Pegden-Smart}{article}{
      author={Pegden, Wesley},
      author={Smart, Charles K},
      title={Convergence of the Abelian Sandpile},
      journal={Duke Mathematical Journal, to appear},
      note={\arxiv{1105.0111}}
    }

    \bib{Savin}{article}{
      author={Savin, Ovidiu},
      title={Small perturbation solutions for elliptic equations},
      journal={Comm. Partial Differential Equations},
      volume={32},
      date={2007},
      number={4-6},
      pages={557--578},
      issn={0360-5302},
      review={\MR{2334822}},
      doi={10.1080/03605300500394405},
    }

    \bib{Sportiello}{article}{
      author={Sportiello, Andreas},
      title={The limit shape of the Abelian Sandpile identity},
      note={Limit shapes, ICERM 2015}
    }
    
  \end{biblist}
\end{bibdiv}

\end{document}